\documentclass[10pt]{amsart}
\usepackage{amssymb, amscd, amsmath, amsthm, epsfig, latexsym, enumerate}

\renewcommand{\geq}{\geqslant}
\renewcommand{\leq}{\leqslant}

\newcommand{\Z}{\mathbb Z}

\DeclareMathOperator{\cd}{cd}

\newtheorem{theorem}{Theorem}
\newtheorem{lemma}[theorem]{Lemma}
\newtheorem{cor}[theorem]{Corollary}

\newtheorem*{thm}{Theorem}

\newtheorem*{cor*}{Corollary}

\begin{document}
\title{Aspherical $PD_3$-pairs}

\author{Jonathan A. Hillman }
\address{School of Mathematics and Statistics\\
     University of Sydney, NSW 2006\\
      Australia }

\email{jonathanhillman47@gmail.com}

\begin{abstract}
We extend two results known for aspherical 3-manifolds to $PD_3$-pairs $(P,\partial{P})$ 
with aspherical ambient space $P$.
Every such $PD_3$-pair may be assembled by attaching 1-handles to $PD_3$-pairs 
with aspherical ambient space and $\pi_1$-injective boundary,
and copies of $(D^3,S^2)$.
(Thus the study of such pairs reduces to the study of $PD_3$-pairs of groups.)
If $\pi$ is a group of type $FP$ whose indecomposable factors $G_i$ each have $\chi(G_i)=0$
then there are only finitely many such $PD_3$-pairs with $\pi_1(P)\cong\pi$.
\end{abstract}

\keywords{aspherical,  1-handle, $PD_3$-pair, well-built}

\maketitle
A $PD_3$-pair $(P,\partial{P})$ with peripheral system $(\pi,\{\kappa_j\})$  
and orientation character $w:\pi\to\Z^\times$
is determined up to homotopy equivalence by its fundamental triple $((\pi,\{\kappa_j\}), w,\mu)$,  
where $\mu\in{H_3(\pi,\{\kappa_j\};\Z^w})$ is the image of a fundamental class,
by the Classification Theorem of Hendriks \cite[Theorem 3.2]{PDD3}.
The pair $(P,\partial{P})$ is equivalent to a $PD_3$-pair of groups if and only if 
$\pi$ has one end and the peripheral system is injective.
 
In \cite[Chapter 13]{PDD3} we showed that if $\pi$ has a ``large enough" free factor 
and aspherical boundary components then $(P,\partial{P})$ may be assembled 
from $PD_3$-complexes (with empty boundary) and pairs equivalent to $PD_3$-pair of groups, 
by forming connected sums and adding 1-handles.
The basic idea was to show that the peripheral systems could be decomposed into 
sums of simpler systems which inherited the boundary compatibility and Turaev conditions
that ensured that they could be realized by $PD_3$-pairs, at least if the peripheral system is injective,
and then to use the Classification Theorem to identify the pair assembled from 
such pieces with the original pair.

In the first of our two main results, Theorem \ref{asph wellb},
we show that if the ambient space $P$ is aspherical then the pair $(P,\partial{P})$ 
may be assembled from pairs equivalent to $PD_3$-pairs of groups 
by adding 1-handles (which includes forming boundary connected sums).
We  shall use some of the steps from \cite[Chapter 13]{PDD3}, 
but avoid the extra hypothesis on free factors, 
by using \cite[Theorem 8]{DH3} to see that the resulting fragments are $PD_3$-pairs.
(Our present argument does seem to need asphericity of $P$.)

The first section summarizes the relevant terminology from \cite{PDD3}.
The asphericity hypotheses imply that the homotopy type of the pair is determined
 by its peripheral group system.
This is used in \S2 to identify candidates for the fragments of the pair corresponding to 
the decomposition of the group, and then to see that the pair
reassembled from the fragments is homotopy equivalent to the original pair. 
Theorem \ref{asph wellb} represents a sharpening of some of the results of Chapter 13 of \cite{PDD3},
and provides an analogue of the Loop Theorem for aspherical $PD_3$-pairs, 
strengthening the Algebraic Loop Theorem of Crisp \cite[Theorem 3.10]{PDD3}.
(The Algebraic Loop Theorem is essential for our argument.)

In \S3 we give our second main result, Theorem \ref{finitely many},
in which we show that under more stringent conditions
(corresponding to an aspherical  3-manifold with boundary components tori or Klein bottles),
the pair $(P,\partial{P})$ is determined up to a finite ambiguity by the group $\pi$ alone.
This result extends Theorem 3.14 of \cite{PDD3}, 
which assumed also that $\pi$ had one end and the group system was atoroidal.
(The corresponding result for irreducible 3-manifolds does not need the restriction on the
boundary components \cite{Sw80}.)

\section{preliminaries}

We shall say that a $PD_3$-pair $(P,\partial{P})$ is {\it aspherical\/} if $P$ is aspherical.

\subsection*{Adding 1-handles}

If $(U,V)$ is a CW pair with $V$ a surface then we may {\it add a 1-handle} $h^1=D^1\times{D^2}$
by identifying $\partial_-h^1=S^0\times{D^2}$ with a pair of disjoint $2$-discs in $V$,
to obtain a new pair $(U',V')=(U,V)\cup{h^1}$, with $U'\simeq{U}\vee{S^1}$.
(There are choices of components of $U$ and $V$ and local orientations 
to be made,  but we will not complicate the notation!)
If the components of $\partial_-h^1$ lie in the same component of $V$ then $V'=V\#T$ or $V\#Kb$,
and we say that $(U',V')=(U,V)\natural(E,\partial{E})$ is a {\it boundary connected sum}, 
with $E$ the total space of a $D^2$-bundle over $S^1$.
If they lie in different components of $U$ then $V'$ is obtained from $V$ by forming the 
internal connected sum of two of its components, and we again say that $(U',V')$ 
is the result of a boundary-connected sum.
There is a third possibility, 
in which the components of $\partial_-h^1$ lie in the same
component of $U$ but in different components of $V$.

We shall use the following variant of \cite[Theorem 2.1]{Wa67}.

\begin{thm}
\cite[Theorem 8]{DH3}
If $(U, V)$ is a $CW$-pair such that $V$ is a $PD_{n-1}$-complex
and  the pair $(U',V')=(U,V)\cup{h^1}$ obtained by attaching a $1$-handle $h^1=D^1\times{D^{n-1}}$ 
(with $\partial_-h^1=S^0\times{D^{n-1}}\subset{V}$) is a $PD_n$-pair then $(U,V)$ is a $PD_n$-pair.
\qed
\end{thm}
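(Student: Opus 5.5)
The plan is to deduce the duality for $(U,V)$ from that for $(U',V')$ by a Mayer--Vietoris comparison of chain complexes with coefficients in group rings. First fix notation. Let $D_\pm$ be the two discs $S^0\times D^{n-1}$ in $V$, let $V_0=V\setminus\mathrm{int}(D_+\cup D_-)$, and let $A=D^1\times S^{n-2}$ be the side of the handle. Then $V'=V_0\cup_{S^0\times S^{n-2}}A$, and $U'=U\cup_{D_+\sqcup D_-}h^1$. Up to homotopy this is $U$ with a $1$-cell attached, so $U'\simeq U\vee S^1$ when the two discs lie in one component of $U$, and $U'$ is homotopy equivalent to a connected sum of the two components otherwise.

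Write $\pi'=\pi_1(U')$. The group $\pi=\pi_1(U)$ (or $\pi_1$ of each component) is a free factor of $\pi'$, so $\mathbb Z[\pi']$ is free as a $\mathbb Z[\pi]$-module. The orientation character $w'$ of $(U',V')$ restricts to characters on $\pi$ and on $\pi_1(V)$. Since $V$ is already a $PD_{n-1}$-complex, the restriction to $\pi_1(V)$ must agree with the orientation character of $V$; I will check this using the local orientation at the discs.

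The candidate fundamental class comes from the excision isomorphism
\[
H_n(U',V';\mathbb Z^{w'})\cong H_n(U,V;\mathbb Z^w),
\]
which holds because $(h^1,\partial_+h^1\cup\partial_-h^1)$ contributes only a relative $1$-cell and the rest is collapsed. Take $[U,V]$ to be the image of $[U',V']$. Its boundary is then $[V]$, possibly after correcting orientations on each component of $V$; I will check this on local homology at a point of $V_0$. This uses the fact that cap product with $[V']$ is an isomorphism and restricts locally.

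Next, compare cap products. Over $\Lambda'=\mathbb Z[\pi']$, consider the commutative ladder of long exact sequences of the pairs $(U',V')$ and $(U,V)$, with cap products from cohomology to homology. To work over $\Lambda=\mathbb Z[\pi]$, use the induced covers: since $\Lambda'$ is free over $\Lambda$, $H^*(U;\Lambda')$ and $H_*(U;\Lambda')$ are obtained from the corresponding $\Lambda$-(co)homology by extending scalars (cohomology needs finite type, which holds since $U'$ is finitely dominated and $U$ is then a retract up to homotopy). The Mayer--Vietoris sequences for $U'=U\cup h^1$ and $V'=V_0\cup A$ relate everything to the pieces. The handle contributes only lower-dimensional cells attached along discs, so the duality maps on the handle parts are isomorphisms by direct computation, namely the duality for $(D^1\times D^{n-1},\partial)$. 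The five lemma then shows that
\[
\cap[U,V]\colon H^p(U;\Lambda')\to H_{n-p}(U,V;\Lambda')
\]
is an isomorphism. The same argument applies to the relative version. Finally, restrict scalars back to $\Lambda$ using freeness: a $\Lambda$-map whose extension $\Lambda'\otimes_\Lambda-$ is an isomorphism is itself an isomorphism, because $\Lambda'\otimes_\Lambda M$ contains $M$ as a direct summand naturally.

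I expect the main obstacle to be the finiteness step: showing that $U$ is finitely dominated, i.e.\ that $C_*(\widetilde U)$ is chain homotopy equivalent to a finite projective complex. The ambient $U'$ is finitely dominated, and the chain complex of its universal cover is $\Lambda'\otimes_\Lambda C_*(\widetilde U)$ plus one free $1$-cell. I would first try the standard argument that a complex whose induction to a group ring of a free product is finitely dominated is itself finitely dominated, by retracting $\pi'\to\pi$ and applying $\mathbb Z[\pi]\otimes_{\Lambda'}-$. In the case of two components, I would instead use the Bass--Serre tree splitting of the chain complex over the free product. The second delicate point is the disconnected case, where $U$ has two components. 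There each component must be handled separately, and the fundamental classes must be split compatibly.
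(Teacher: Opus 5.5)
The paper gives no proof of this statement. It quotes it from \cite[Theorem 8]{DH3} and describes it only as a variant of \cite[Theorem 2.1]{Wa67}. Your outline is Wall's gluing argument specialised to $U'=U\cup_Y h^1$ with $Y=S^0\times D^{n-1}$: a Mayer--Vietoris ladder over $\Z[\pi']$, the five lemma, and descent to $\Z[\pi_1(U_i)]$ using injectivity of $\pi_1(U_i)\to\pi'$. That skeleton is sound. Finiteness is not the obstacle you expect. Your own remark already settles it: each component of $U$ is a homotopy retract of $U'\simeq U\vee S^1$ or $U_1\vee U_2$, hence finitely dominated.

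The real gap is the fundamental class, which is the only non-formal input. First, $H_n(U',V';\Z^{w'})\to H_n(U,V;\Z^{w})$ is not an excision isomorphism. The handle adds two relative $(n-1)$-cells and one relative $n$-cell, not a $1$-cell. When $U=U_1\sqcup U_2$ the source is $\Z$ while the target is $\Z^2$ (e.g.\ for manifolds). You only have the natural map $H_n(U',V')\to H_n(U',V_0\cup h^1)\cong H_n(U,V)$.

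Second, the vertical maps for $h^1$ and $Y$ in your ladder are isomorphisms only if the image of $[U',V']$ in $H_n(h^1,\partial h^1)$ is a generator. Likewise, $\partial[U,V]$ is a fundamental class of $V$ only if it generates $H_{n-1}(V_k;\Z^w)$ for each component $V_k$. ``Local homology at a point'' does not give this in a general $PD_{n-1}$-complex, where even a locally Euclidean point can have local degree $0$. The step genuinely fails under the hypotheses as literally stated. Take $n=3$, $U=(D^3\vee D^2)\sqcup D^3$ and $V=(S^2\vee D^2)\sqcup S^2$, with $D_+$ a disc inside the wedged $D^2$ and $D_-$ a hemisphere of the second sphere. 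Then $(U',V')\simeq(D^3,S^2)$, but the image of $[U',V']$ vanishes on the handle and on the second component. So your ladder proves nothing here, although $(U,V)$ is in fact a $PD_3$-pair.

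You must therefore assume, and use, that the attaching discs carry the fundamental class. This holds when the components of $V$ meeting the discs are closed manifolds, as for the surfaces in every application in this paper. Under that hypothesis the repair goes as follows.
\begin{enumerate}
\item The map of triples $(U',V',\emptyset)\to(U',U\cup A,U)$, together with excision, shows that the degree of $[U',V']$ on $(h^1,\partial h^1)$ equals that of $[V']$ on $(A,\partial A)$, namely $\pm1$.
\item Comparing local degrees of $[V']$ and $\partial[U,V]$ at points of $V_0$ gives $\partial[U,V]=\pm[V_k]$ on each component. This also forces the orientation characters to agree.
\item Relative duality then follows from the absolute one and duality of $V$ by the five lemma, so no separate relative Mayer--Vietoris argument is needed.
\end{enumerate}
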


In our application below (Theorem \ref{asph wellb}), 
we start with an aspherical  $PD_3$-pair, corresponding to $(U',V')$,
and use asphericity to identify a $CW$-pair corresponding to $(U,V)$.
Theorem 8 of \cite{DH3} then ensures that the new pair is again a $PD_3$-pair.

\subsection*{Geometric systems and their decompositions}

Let $Y$ be an aspherical closed surface and $G$ be a group.
A homomorphism $\kappa:S=\pi_1(Y)\to{G}$ is {\it geometric\/} 
if there is a finite family $\Phi$ of disjoint, 
2-sided simple closed curves on $Y$ such that $\mathrm{Ker}(\kappa)$ 
is normally generated by the image of $\Phi$ in $S$.
A {\it geometric group system\/} is a pair $(G,\mathcal{K})$, 
where $G$ is a finitely presentable group and $\mathcal{K}$ is 
a finite set of geometric homomorphisms from $PD_2$-groups to $G$.
It is {\it trivial\/} if $G=1$.
(The set $\mathcal{K}$ is then empty.)
Two geometric group systems $(G_1,\mathcal{K}_1)$ and 
$(G_2,\mathcal{K}_2)$  are {\it isomorphic\/} if there is an isomorphism 
$\theta:G_1\to{G_2}$ and a bijection $b:\mathcal{K}_1\to\mathcal{K}_2$
such that $\theta\circ\kappa$ and $b(\kappa)$ have the same domains
and are conjugate as homomorphisms into $G_2$, 
for all $\kappa\in\mathcal{K}_1$.
A homomorphism $w:G\to\mathbb{Z}^\times$ is an {\it orientation character\/} for $(G,\mathcal{K})$ 
if $w\circ\kappa=w_1(Y)$ for each $\kappa\in\mathcal{K}$.
The peripheral system of a $PD_3$-pair with aspherical boundary is a geometric group system, 
by the Algebraic Loop Theorem \cite[Theorem 3.10]{PDD3}.

There are two versions of connected sum for $PD_3$-pairs, 
corresponding to the internal and boundary connected sums of manifolds.
The peripheral system of an internal connected sum is the free product of
the peripheral systems of the factors, 
while that of the boundary connected is slightly more complicated.
When we use the term ``connected sum" without further qualification, 
we shall allow both possibilities.
In each case, the fundamental group of the ambient space is a free product.

The sum of two geometric group systems $(G_1,\mathcal{K}_1)$ and 
$(G_2,\mathcal{K}_2)$ is the geometric group system
\[
(G_1,\mathcal{K}_1)\sharp(G_2,\mathcal{K}_2)=
(G_1*G_2,\mathcal{K}_1\sqcup\mathcal{K}_2).
\]
A geometric group system $(G,\mathcal{K})$ {\it splits sharply\/} if it is isomorphic 
to such a sum $(G_1,\mathcal{K}_1)\sharp(G_2,\mathcal{K}_2)$.

Suppose that for some $\kappa:S=\pi_1(Y)\to{G}$ in $\mathcal{K}$ 
there is a non-trivial element $\gamma\in\mathrm{Ker}(\kappa)$
which is represented by a separating simple closed curve $C$ on $Y$.
Then $Y/C\simeq{Y_1}\vee{Y_2}$, 
where $Y_1$ and $Y_2$ are closed surfaces,
and $\kappa$ factors through 
$S/\langle\langle\gamma\rangle\rangle\cong{S_1*S_2}$,
where $S_i=\pi_1(Y_i)$, for $i=1,2$.
Let $\kappa(i):S_i\to{G}$ be the induced homomorphisms,
and let $\mathcal{K}^\natural$ be the geometric system 
obtained by replacing $\kappa$ by the pair $\kappa(1), \kappa(2)$.
If $(G,\mathcal{K}^\natural)$ splits sharply 
(as $(G_1,\mathcal{K}_1)\sharp(G_2,\mathcal{K}_2)$, say)
we shall say that
$(G,\mathcal{K})$ {\it splits as a boundary sum}.
We may refer to $(G_1,\mathcal{K}_1)$ and 
$(G_2,\mathcal{K}_2)$ as components of $(G,\mathcal{K})$,
for either notion of splitting.

The group system arising from the third version of adding a 1-handle may be described as follows.
Let $(G,\mathcal{K})$ be a geometric group system, 
and let $\kappa_1:S_1=\pi_1(Y_1)\to{G}$ and $\kappa_2:S_2=\pi_1(Y_2)\to{G}$ 
be distinct elements of $\mathcal{K}$.
Let $Y=Y_1\#Y_2$ and let $c:Y\to{Y_1\vee{Y_2}}$ be the map 
which collapses the separating simple closed curve associated with the connected sum.
Let $\widehat{G}=G*\langle{t}\rangle$ and let $\widehat\kappa$ be the composite 
of $\pi_1(c):S=\pi_1(Y)\to{S_1}*{S_2}$ with $\kappa_1*c_t\kappa_2$, 
where $c_t$ is conjugation by $t$.
Replace the pair $\kappa_1$ and $\kappa_2$ in $\mathcal{K}$ 
by $\widehat\kappa$, to obtain $\widehat{\mathcal{K}}$.
Then we may say that $(\widehat{G},\widehat{\mathcal{K}})$ is the group system obtained by 
{\it adding a $1$-handle\/} to $(G,\mathcal{K})$.
(In the geometric situation,  adding a 1-handle includes the notion of 
boundary connected sum, but here it seems simpler to have separate definitions.)
However, we shall not use this observation.

Let $(G,\{\kappa_j|j\in{J}\})$ be a geometric group system such that 
$G\cong(*_{i\in{I}}G_i)*W$, 
where the factors $G_i$ are torsion free and have one end,   and $W$ is virtually free.
Then $\Gamma(G,\{\kappa_j\})$ is the bipartite graph with 
vertex set $I\sqcup{J}$ and an edge from $j\in{J}$ to $i\in{I}$ 
for each indecomposable factor of $\mathrm{Im}(\kappa_j)$ 
which is a $PD_2$-group and is conjugate to a subgroup of $G_i$.

\subsection*{Well built $PD_3$-pairs}

A $PD_3$-pair $(P,\partial{P})$ is {\it well-built\/} if it can be obtained 
by adding 1-handles to a connected sum of $PD_3$-complexes, 
pairs corresponding to $PD_3$-pairs of groups and copies of $(D^3,S^2)$.
Compact 3-manifolds are well-built,
but it remains an open question whether every $PD_3$-pair is well-built.


The key question in deciding whether every $PD_3$-pair is well-built
seems to be whether  \cite[Lemma 13.5]{PDD3} can be adapted to show that
if $(G,\mathcal{K})$ is a geometric group system,  
$w:G\to\Z^\times$ a homomorphism and $\mu\in{H_3(G,\mathcal{K};\Z^w)}$ 
a class which satisfies the boundary compatibility and Turaev conditions,
and if $G$ has one end then the graph $\Gamma(G,\mathcal{K})$ is a tree.
(Poincar\'e duality is used in \cite[Lemma 13.5]{PDD3}.)

If these conditions hold then the following are equivalent
\begin{enumerate}
\item{}$\Gamma(G,\mathcal{K})$ is a tree;
\item{}$\mathcal{K}$ is injective;
\item$(G,\mathcal{K})$ is the peripheral system of a $PD_3$-pair.
\end{enumerate}

In Theorem \ref{asph wellb} we shall not need to extend the validity of \cite[Lemma 13.5]{PDD3}.

\section{aspherical $PD_3$-pairs are well-built}

If $(P,\partial{P})$ is aspherical then either $(P,\partial{P})\simeq(D^3,S^2)$
or it has aspherical boundary.
(See \cite[Lemmas 3.1 and 3.11]{PDD3}.)
In each case the homotopy type of the pair is determined by the peripheral system 
$(\pi,\{\kappa_j\})$ alone.

We may assume that $\partial{P}$ is non-empty.
We note first that if $\pi_1(P)\cong\Z$ then $(P,\partial{P})\simeq(D^2\times{S^1},T)$ 
(if orientable) or $(D^2\tilde\times{S^1},Kb)$ (if non-orientable).
In each case these may be obtained from $(D^3,S^2)$ by adding a 1-handle.

\begin{lemma}
\label{asph pair}
Let $(P,\partial{P})$ be a $PD_3$-pair with peripheral system $(\pi,\{\kappa_j\})$.
If $\partial{P}$ is non-empty then the following are equivalent.
\begin{enumerate}
\item{}$P$ is aspherical;
\item{}$\cd\pi\leq2$ and $\chi(P)=\chi(\pi)$;
\item{}$\pi\cong(*^r_{i=1}G_i)*F(s)$, where $G_i$ has one end and $\cd{G_i}=2$
for $i\leq{r}$,  and $\chi(P)=1-r-s+\Sigma_{i=1}^r\chi(G_i)$.
\end{enumerate}
If these conditions hold and $\pi\not=1$ then the components of $\partial{P}$ are aspherical.
If, moreover, $s=0$ then $\Gamma(\pi,\{\kappa_j\})$ is a tree.
\end{lemma}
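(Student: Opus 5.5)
We prove the cycle (1)$\Rightarrow$(2)$\Rightarrow$(3)$\Rightarrow$(1), then treat the two supplementary assertions.

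\emph{(1)$\Rightarrow$(2).} Suppose $P$ is aspherical. Then $P\simeq K(\pi,1)$, so $\chi(P)=\chi(\pi)$. Since $\partial P\neq\emptyset$, the pair $(P,\partial P)$ is homotopy equivalent to a finite $2$-dimensional complex; this is the standard fact that a $PD_3$-pair with nonempty boundary has the homotopy type of a $2$-complex, coming from Poincar\'e--Lefschetz duality $H^3(P;M)\cong H_0(P,\partial P;M)=0$ for all coefficient modules $M$. Hence $\cd\pi\leq 2$.

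\emph{(2)$\Rightarrow$(3).} A group of cohomological dimension at most $2$ is torsion free, and $\pi$ is finitely presentable. So Grushko's theorem and the Stallings--Dunwoody accessibility theorem for finitely presentable groups give a decomposition $\pi\cong(*_{i=1}^r G_i)*F(s)$, where each $G_i$ has one end. A one-ended torsion-free group is not free, so $\cd G_i=2$. The Euler characteristic is additive under free products in the form $\chi(A*B)=\chi(A)+\chi(B)-1$. Combined with $\chi(F(s))=1-s$, this gives $\chi(\pi)=1-r-s+\sum\chi(G_i)$, and hence the stated formula for $\chi(P)$.

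\emph{(3)$\Rightarrow$(1).} This is the main step. The free product in (3) has $\cd\pi\leq2$ and $\chi(\pi)$ equal to the given sum, so $\chi(P)=\chi(\pi)$. Take $P$ to be a finite $2$-complex $K$ (again using that $\partial P\neq\emptyset$), and let $\widetilde K$ be its universal cover. Then $\pi_2(K)=H_2(\widetilde K)$ is the kernel of $\partial_2\colon C_2(\widetilde K)\to C_1(\widetilde K)$, and the cellular chain complex gives an exact sequence
\[0\to\pi_2(K)\to C_2\to C_1\to C_0\to\Z\to0.\]
Since $\cd\pi\leq2$, Schanuel's lemma shows that $\pi_2(K)$ is stably free, and its rank is $\chi(K)-\chi(\pi)=0$. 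So $\pi_2(K)\oplus\Z[\pi]^m\cong\Z[\pi]^m$ for some $m$. The group $\pi$ satisfies the strong Bass conjecture in the sense needed here: either Kaplansky's theorem that group rings are weakly finite, or $L^2$-Betti numbers, or simply $\mathbb{C}[\pi]$ being stably finite, gives $\pi_2(K)\otimes_{\Z[\pi]}\mathbb{C}[\pi]$ of von Neumann dimension $0$. This forces $\pi_2(K)=0$. As $K$ is $2$-dimensional, $\widetilde K$ is then contractible, so $P$ is aspherical. The delicate point is exactly this passage from a stably free module of rank $0$ to the zero module. I would cite Kaplansky (stable finiteness of $\mathbb{C}[\pi]$) together with the fact that $\pi_2$ embeds in the free module $C_2$.

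\emph{Asphericity of the boundary.} Suppose $P$ is aspherical and $\pi\neq1$. Then $(P,\partial P)\not\simeq(D^3,S^2)$, since $D^3$ has trivial $\pi_1$. By the dichotomy quoted from \cite[Lemmas 3.1 and 3.11]{PDD3}, the boundary components are aspherical. Concretely, an $S^2$ or $RP^2$ component would yield a nonzero class in $\pi_2(P)$, or a torsion element, unless $P$ is $D^3$.

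\emph{The tree assertion when $s=0$.} Here $\pi=*_{i=1}^r G_i$, so the free factor $W$ in the definition of $\Gamma$ is trivial. I would argue via Mayer--Vietoris and Euler characteristics, imitating \cite[Lemma 13.5]{PDD3}. By the Algebraic Loop Theorem each $\kappa_j$ is geometric. Compressing $\partial P$ along the curves in $\Phi$ replaces each boundary component by surfaces whose fundamental groups map onto the $PD_2$ factors of $\mathrm{Im}(\kappa_j)$; these factors are the edges of $\Gamma$.

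Connectivity of $\Gamma$ comes from connectivity of $P$: the free product splitting $\pi=*G_i$ is dual to a system of separating discs, and each $G_i$ must touch the boundary. That each $G_i$ touches the boundary holds because a one-ended factor with no peripheral edges would be a $PD_3$-group summand, contradicting $\cd G_i=2$.

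Acyclicity should follow from $s=0$. A cycle in $\Gamma$ would produce an HNN-type loop, that is, an infinite cyclic free factor of $\pi$. To make this precise I would count via $\chi$. Doubling gives $2\chi(P)=\chi(\partial P)$. Comparing this with $\chi(P)=1-r+\sum\chi(G_i)$ and the contribution of each edge, the number of edges is forced to be $|I|+|J|-1$. A connected graph with this many edges is a tree. I expect this counting to be the main obstacle after (3)$\Rightarrow$(1).
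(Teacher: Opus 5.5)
\textbf{Gap 1: the step (3)$\Rightarrow$(1).} This step rests on ``take $P$ to be a finite $2$-complex $K$'', and that is not available. Poincar\'e--Lefschetz duality gives $H^3(P;M)\cong H_0(P,\partial P;M)=0$ for all $M$. But passing from a finite $3$-complex with this property to a homotopy equivalent finite $2$-complex is Wall's D(2) problem, which is open. The paper explicitly allows that $P$ may not be homotopy equivalent to a finite $2$-complex. (In (1)$\Rightarrow$(2) the misstatement is harmless, since $H^3(P;M)=0$ and $P\simeq K(\pi,1)$ already give $\cd\pi\leq2$.)

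The repair, which is what the paper does, is to keep the $3$-dimensional chain complex $C_*$ of $\widetilde P$. Schanuel's Lemma makes $B_1=\mathrm{Im}(\partial_2)$ projective, so $C_2\cong B_1\oplus Z_2$ with $Z_2$ finitely generated projective. Then $H_3(P;\Z[\pi])=0$ together with $H^3(P;C_3)=0$ makes $\partial_3\colon C_3\to Z_2$ a split monomorphism. Hence $\Pi=\pi_2(P)=Z_2/\partial_3(C_3)$ is finitely generated projective, and $\Z\otimes_{\Z[\pi]}\Pi$ is free abelian of rank $\chi(P)-\chi(\pi)=0$.

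There is a second problem at the same point. Schanuel yields projective, not stably free. Stable freeness would require $\pi$ to have a finite \emph{free} resolution of length $2$, i.e.\ the vanishing of a class in $\widetilde{K}_0(\Z[\pi])$. So Kaplansky's stable finiteness of $\mathbb{C}[\pi]$ does not by itself kill $\Pi$. What is needed is the weak Bass conjecture for $\pi$ (augmentation rank equals Kaplansky rank for finitely generated projectives). The paper takes this from Eckmann \cite{Eckm} for groups of cohomological dimension $\leq2$, and with Kaplansky's positivity it forces $\Pi=0$.

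\textbf{Gap 2: the tree assertion.} Your connectivity argument uses manifold notions (separating discs dual to the free splitting) that have no meaning for a $PD_3$-pair. Moreover, ``each $G_i$ touches the boundary'' would only exclude isolated vertices in $I$, not a disconnected graph. The paper argues instead that if $\Gamma(\pi,\{\kappa_j\})$ were disconnected, asphericity would give $(P,\partial P)\simeq(U\vee U',V\sqcup V')$ with $V$ and $V'$ non-empty, and such a pair cannot satisfy Poincar\'e duality. Acyclicity then follows from $\chi(\Gamma(\pi,\{\kappa_j\}))=1$, which the paper quotes directly from \cite[Lemma 13.5]{PDD3}. Your edge count is only sketched, and making it work would require relating each $\chi(G_i)$ to the peripheral surfaces at vertex $i$, which is essentially the content of that lemma.
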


\begin{proof}
The implications $(1)\Rightarrow(2)\Leftrightarrow(3)$ are clear.
If (2) holds and $P$ is a finite 2-complex then $P$ is aspherical, by \cite[Theorem 2.8]{[Hi]}.
If $P$ is not homotopy equivalent to a finite 2-complex then we must modify the argument.
Let $\Gamma=\Z[\pi]$.
The equivariant chain complex for $\widetilde{P}$ is chain homotopy equivalent to
a finite complex $C_*$ of finitely generated left $\Gamma$-modules,
with $C_i=0$ for $i>3$.
The submodule $B_1=\mathrm{Im}(\partial_2)\leq{C_1}$ is projective
and finitely generated,  by Schanuel's Lemma, since $\cd\pi\leq2$.
Hence $C_2\cong{B_1}\oplus{Z_2}$ and the submodule of 2-cycles is also projective
and finitely generated.
Since $H_3(C_*)=H_3(P;\Z[\pi])=0$, by Poincar\'e duality, 
$\partial_3:C_3\to{Z_2}$ is a split monomorphism.
Hence $\Pi=\pi_2(P)=H_2(C_*)$ is also projective and finitely generated.
On tensoring with the augmentation module $\Z$ we see that $\Z\otimes_\Gamma\Pi)$
is a free abelian group of rank $\chi(P)-\chi(\pi)$, and so is 0.
But then $\Pi=0$, since the Weak Bass conjecture holds for groups of 
cohomological dimension $\leq2$ \cite{Eckm}.
Hence $P$ is aspherical.

If $P$ is aspherical and $\pi\not=1$ then all components of $\partial{P}$ are aspherical,  \cite[Lemma 3.1]{PDD3}.
If, moreover,  $s=0$ then $\chi(\Gamma(\pi,\{\kappa_j\})=1$, 
by \cite[Lemma 13.5]{PDD3}.
Suppose that $\Gamma(\pi,\{\kappa_j\})$ is not connected.
Since $P$ is aspherical we then see that $P\simeq{U}\vee{U'}$ and $\partial{P}=V\sqcup{V'}$,
with $V$ and $V'$ non-empty.
Hence $(P,\partial{P})$ has a proper decomposition as $(U\vee{U'},V\sqcup{V'})$.
But such a space cannot satisfy Poincar\'e duality,
unless either $U$ is contractible and $V$ is empty or $U'$ is contractible and $V'$ is empty.
Thus $\Gamma(\pi,\{\kappa_j\})$ is connected, and so is a tree.
\end{proof}

\begin{theorem}
\label{asph wellb}
Every aspherical $PD_3$-pair is well-built.
\end{theorem}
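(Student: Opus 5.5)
We argue by induction on the rank $s$ of the free factor $F(s)$ in the decomposition $\pi\cong(*^r_{i=1}G_i)*F(s)$ of Lemma \ref{asph pair}, and within that on the number of boundary components. If $\pi=1$ then $(P,\partial{P})\simeq(D^3,S^2)$; if $\pi\cong\Z$ we have seen that the pair arises from $(D^3,S^2)$ by adding a 1-handle. So we may assume the components of $\partial{P}$ are aspherical, by Lemma \ref{asph pair}.

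\emph{Base case $s=0$.} Here $\Gamma(\pi,\{\kappa_j\})$ is a tree, by Lemma \ref{asph pair}. If every $\kappa_j$ is injective and $r=1$, then $\pi$ has one end and the peripheral system is injective, so the pair is equivalent to a $PD_3$-pair of groups. Otherwise some $\kappa_j$ has nontrivial kernel. Since $s=0$, this kernel must be detected by a \emph{separating} curve. Indeed, the image of $\kappa_j$ is a free product of the $PD_2$-subgroups conjugate into the $G_i$, and the Algebraic Loop Theorem gives a geometric family $\Phi$. A non-separating curve in the kernel would produce a free $\Z$ factor in $\pi$ (via the tree structure and a Mayer--Vietoris/Euler characteristic count), which is excluded. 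So $(\pi,\{\kappa_j\})$ splits as a boundary sum $(G',\mathcal{K}')\sharp(G'',\mathcal{K}'')$, corresponding to cutting the tree at the vertex $j$.

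\emph{Inductive step $s>0$.} We first use the Algebraic Loop Theorem on a boundary component whose $\kappa_j$ has image meeting the free part. We try to find either a separating kernel curve as above, or a non-separating kernel curve $C$ on some $Y_j$ whose compression corresponds to splitting off a free generator $t$. In the latter case the system has the form $(\widehat G,\widehat{\mathcal K})$ obtained from a system $(G,\mathcal{K})$ with $\pi=G*\langle t\rangle$ by adding a 1-handle of one of the three types.

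\emph{Realising and reassembling the pieces.} In each case we realise the candidate fragment concretely. Let $U$ be a $K(G,1)$ (or the pieces $K(G',1)$, $K(G'',1)$). Let $V$ be the surfaces obtained by compressing $\partial{P}$ along $C$, mapped to $U$ by the induced homomorphisms. Replace $U$ by a mapping cylinder so that $(U,V)$ is a CW pair. Attaching the 1-handle (or forming the boundary connected sum) gives a pair $(U',V')$ whose peripheral system is isomorphic to $(\pi,\{\kappa_j\})$ and with $U'\simeq U\vee S^1$ aspherical. Since aspherical pairs are determined up to homotopy by their peripheral systems (hence by the fundamental triple, via Hendriks' Classification Theorem), $(U',V')\simeq(P,\partial{P})$, so $(U',V')$ is a $PD_3$-pair. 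Then Theorem 8 of \cite{DH3} shows that $(U,V)$ is a $PD_3$-pair. It is aspherical, and its free rank or complexity has dropped, so induction applies. Note that in the boundary-sum case the 1-handle joins different components of $U$, so $(U,V)$ is a disjoint union of two aspherical $PD_3$-pairs, each handled by induction.

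\emph{Main obstacle.} The hard step is showing that when $s>0$ (or when the system is not injective) there really is a kernel curve giving a splitting compatible with the free product decomposition of $\pi$. That is, nontrivial $F(s)$ must be ``seen'' by compressing curves on $\partial{P}$, rather than the peripheral system being injective while $\pi$ is still decomposable. To establish this we would combine the Euler characteristic identity of Lemma \ref{asph pair}(3) with $\chi(\partial{P})=2\chi(P)$ and the graph $\Gamma$. Injectivity of all $\kappa_j$ would force $\Gamma$ to be a tree on the one-ended factors and leave no room for free factors; by Poincar\'e duality, the decomposition $(U\vee U',V\sqcup V')$ argument used in Lemma \ref{asph pair} then rules out a stray free factor.
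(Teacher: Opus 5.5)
Your realisation-and-reassembly mechanism is exactly the paper's: realise the smaller group system by an aspherical $CW$-pair, identify the result of attaching the 1-handle with $(P,\partial{P})$ via asphericity and the Classification Theorem, then apply \cite[Theorem 8]{DH3}. The gap is in the decomposition step for $s>0$. You flag this step as the main obstacle but do not resolve it, and you mis-assign the handle types there. A non-separating kernel curve yields only a handle of the \emph{first} type, i.e.\ a boundary sum with $(\Z,E)$, where $E=\Z^2$ or $\pi_1(Kb)$. This is \cite[Lemma 13.8]{PDD3}; your Mayer--Vietoris/Euler characteristic remark does not prove it.

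One can have $s>0$ with no non-separating kernel curves at all. Attach a 1-handle joining the two boundary components of $F\times[0,1]$, with $F$ a closed aspherical surface. Then $\pi\cong\pi_1(F)*\Z$. The kernel of the peripheral map on $F\#F$ is normally generated by the separating belt curve, so it lies in the commutator subgroup, whereas non-separating curves are nonzero in $H_1(F\#F;\mathbb{F}_2)$. Moreover $\Gamma$ has two vertices joined by two edges, so it is not a tree. Here your ``separating curve gives a boundary sum'' step fails: the only nontrivial sharp splitting of $(\pi,\mathcal{K}^\natural)$ has a component $(\Z,\emptyset)$, which is not the system of any aspherical $PD_3$-pair. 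The free generator comes from a handle of the \emph{third} type attached along a \emph{separating} curve, and nothing in your plan detects when this happens.

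The missing idea is the paper's use of the graph. Once the non-separating kernel curves have been removed, the images of the $\kappa_j$ are free products of $PD_2$-groups. Then \cite[Lemma 13.5]{PDD3} gives $\pi\cong G*F(n)$ with $n=\beta_1(\Gamma(\pi,\{\kappa_j\}))$, and $\Gamma$ is connected by Lemma \ref{asph pair}. For a separating kernel curve with $S_j=S_{j1}\#S_{j2}$ there is then a dichotomy. If splitting the vertex $j$ into two vertices leaves $\Gamma$ connected, consider the system with $\breve\pi=G*F(n-1)$ and $S_j$ replaced by $S_{j1}\sqcup S_{j2}$. It is realised with ambient space $K(\breve\pi,1)$, and a third-type 1-handle joining $S_{j1}$ to $S_{j2}$ recovers $(P,\partial{P})$. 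Otherwise (in particular whenever $\Gamma$ is a tree, so $n=0$) the curve gives a boundary connected sum.

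Your proposed resolution is that if all $\kappa_j$ are injective then $\Gamma$ is a forest and there is no free factor. That is only the degenerate case of this count, and it produces no splitting when the system is non-injective and $s>0$. Also, the number of boundary components need not drop under a boundary-sum splitting, so induct on $n$ and on the number $r$ of one-ended factors instead.
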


\begin{proof}
Let $(P,\partial{P})$ be an aspherical $PD_3$-pair.
If the peripheral system $(\pi,\{\kappa_j\})$ is injective then the result is clear,
so we may assume that $(\pi,\{\kappa_j\})$ is not injective.
We may also assume that $\pi$ is not a free group 
(since this case is settled in \cite[Theorem 3.12]{PDD3})
and $\partial{P}$ is non-empty.
(If $\pi_1(P)\cong\Z$ and $\partial{P}$ is non-empty then $(P,\partial{P})\simeq(D^2\times{S^1},T)$ 
(if orientable) or $(D^2\tilde\times{S^1},Kb)$ (if non-orientable).
In each case these may be obtained from $(D^3,S^2)$ by adding a 1-handle.)
We shall prove the theorem by successively reducing first the peripheral system 
and then the pair to simpler pieces.

The kernels of the homomorphisms $\kappa_j$ are represented by a finite family of disjoint, 
2-sided essential simple closed curves on $\partial{P}$, by the Algebraic Loop Theorem.
Suppose that $\gamma\subset{S}$ is one such curve on a boundary component $S=S_j$.
If $\gamma$ is non-separating,  then there is another simple closed curve $\xi$
which meets $\gamma$ transversely in one point.
Let $S'$ be the surface obtained from $S$ by surgery on $\gamma$.
Then $S\cong{S'\#W}$, 
where $W=T$ if $\xi$ is orientation preserving and $W=Kb$, otherwise.
We may apply \cite[Lemma 13.8]{PDD3} to see that $(\pi,\{\kappa_j\})$  splits as a boundary sum of 
$(\sigma,\{\lambda_\ell\})$ and $(\Z,E)$,
where $\pi\cong\sigma*\Z$ and $E=\Z^2$ or $\pi_1(Kb)$,
and $\{\lambda_\ell\}$ is obtained from $\{\kappa_j\}$ by replacing $\kappa_j$ by the homomorphisms 
$\kappa_j':\pi_1(S')\to\pi$ and $\kappa_W:\pi_1(W)\to\pi$ induced by $\kappa_j$.
(Note that $\mathrm{Im}(\kappa_W)\cong\Z$.)
The  aspherical $CW$-pair realizing $(\sigma,\{\lambda_\ell\})$ is then a $PD_3$-pair by
\cite[Theorem 8]{DH3}.
We may repeat this process for other non-separating curves in the family,
and reduce to a situation where $\pi\cong\omega*F(s)$ and
$(\pi,\{\kappa_j\})$ splits as an iterated boundary sum of
$(\omega,\{\lambda_\ell\})$ with $s$ terms $(\Z,E_1),\dots(\Z,E_s)$,
and there are no non-separating curves representing elements in any $\mathrm{Ker}(\lambda_\ell)$.

Suppose that $\lambda_\ell:\pi_1(S_\ell)\to\sigma$,
for some aspherical surface $V_\ell$.
Let $V=\sqcup_\ell{S_\ell}$, and let $U$ be the mapping cylinder of the map from $V$ 
to $K(\omega,1)$ which induces the homomorphisms $\lambda_\ell$.
Then $(P,\partial{P})$ is the result of adding 1-handles to $(U,V)$,
and so $(U,V)$ is an aspherical $PD_3$-pair, by \cite[Theorem 8]{DH3}.
It will clearly suffice to show that $(U,V)$ is well-built.

Thus we may reduce to the case with no simple closed curves $\gamma\subset\partial{P}$ 
which are null-homotopic in $P$,  but non-separating in $\partial{P}$.
The subgroups $\mathrm{Im}(\kappa_j)$ are then free products of $PD_2$-groups.
Therefore \cite[Lemma 13.5]{PDD3} applies, and so $\pi\cong{G}*F(n)$,
where $n=\beta_1(\Gamma(\pi,\{\kappa_j\}))$.
The graph $\Gamma(\pi,\{\kappa_j\})$ is connected,  by Lemma \ref{asph pair}.

Suppose that $\gamma$ is an essential curve on $S_j$ which has trivial image in $\pi$.
Then $S_j=S_{j1}\#S_{j2}$ is a connected sum along $\gamma$. 
Suppose that deleting the vertex $j$ and the contiguous edges 
does not separate the graph $\Gamma(\pi,\{\kappa_j\})$.
Then we may construct a new group system $(\breve\pi,\{\breve\kappa_i\})$
with $\breve\pi=G*F(n-1)$ and $\{\breve\kappa_i\}$ obtained by setting 
$\breve\kappa_i=\kappa_i$ for $i\not=j$ and replacing $\kappa_j$ by the  homomorphisms  
$\breve\kappa_{j1}:\pi_1(S_{j1})\to\pi$ and $\breve\kappa_{j2}:\pi_1(S_{j2})\to\pi$ 
induced by $\kappa_j$.
This system may be realized by a $CW$-pair $(\breve{U},\breve{Y})$
with $\breve{U}=K(\breve\pi,1)$ and $\breve{Y}$ obtained from $Y$ by replacing $S_j$
by $S_{j1}\sqcup{S_{j2}}$. 
Adding a 1-handle to $(\breve{U},\breve{Y})$ which connects $S_{j1}$ and $S_{j2}$ 
recovers our original pair (by asphericity and the Classification Theorem)
and so $(\breve{U},\breve{Y})$  is a $PD_3$-pair \cite[Theorem 8]{DH3}.

After finitely many steps we find that we may assume that $\Gamma(\pi,\{\kappa_j\})$ 
is a tree, and $n=0$. 
Similar arguments show that if some $\kappa_j$ has non-trivial kernel then 
$(\pi,\{\kappa_j\})$ is the boundary sum of two simpler group systems,
and $(P,\partial{P})$ is the corresponding boundary connected sum.
After finitely many such reductions we see that $(P,\partial{P})$ is an iterated boundary 
connected sum of pairs $(P_i,\partial{P}_i)$ with peripheral system $(G_i,\mathcal{K}_i)$.
Since $G_i$ has one end the homomorphisms in $\mathcal{K}_i$ are injective,
and so each of these group systems is a $PD_3$-pair of groups.
\end{proof}

\begin{cor}
If $P$ is aspherical and $G$ is an indecomposable factor of  $\pi=\pi_1(P)$ 
then either $G$ is the ambient group of a $PD_3$-pair of groups $(G,\Omega)$ or
$G\cong\Z$.
\qed
\end{cor}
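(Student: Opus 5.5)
The plan is to read this off the decomposition produced in the proof of Theorem \ref{asph wellb}, together with uniqueness of free product decompositions. Let $(P,\partial{P})$ be aspherical with $\pi=\pi_1(P)$. If $\partial{P}$ is empty, then $P$ is an aspherical $PD_3$-complex. In that case $\pi$ is a $PD_3$-group, which is indecomposable and torsion free, so $G=\pi$ and $(G,\emptyset)$ is a $PD_3$-pair of groups. So I would assume $\partial{P}\not=\emptyset$. By Lemma \ref{asph pair}, $\pi\cong(*_{i=1}^rG_i)*F(s)$, where each $G_i$ has one end and $\cd{G_i}=2$. Since $\pi$ is torsion free, the Kurosh subgroup theorem (in the form of Grushko--Neumann uniqueness) shows that every indecomposable free factor $G$ of $\pi$ is either infinite cyclic or conjugate to one of the $G_i$. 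It therefore suffices to show that each one-ended factor $G_i$ is the ambient group of a $PD_3$-pair of groups.

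For this I would go through the reduction in the proof of Theorem \ref{asph wellb}. There, $(P,\partial{P})$ is rebuilt from pieces in two ways: by adding $1$-handles, each of which contributes a free $\Z$ factor, and by iterated boundary connected sums of pairs $(P_i,\partial{P}_i)$ with peripheral systems $(G_i,\mathcal{K}_i)$. At each step the ambient group of the resulting pair is the free product of the ambient groups of the pieces, and each piece with nontrivial group has one end. Once this is tracked, the one-ended factors appearing in the final decomposition are exactly the one-ended free factors of $\pi$, up to conjugacy, again by uniqueness of the Grushko decomposition. The final step of that proof shows that each piece $(P_i,\partial{P}_i)$ has injective peripheral system, because $G_i$ has one end. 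Hence $(G_i,\mathcal{K}_i)$ is a $PD_3$-pair of groups, and we may take $\Omega=\mathcal{K}_i$.

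I expect the main obstacle to be the bookkeeping that identifies the factors. The reduction steps replace $\pi$ by groups such as $\sigma$ or $\breve\pi$, and I must check that the one-ended factors are never altered, only free $\Z$ factors removed. The point to verify is that each step (splitting off $(\Z,E)$, cutting along a curve in a boundary component, and splitting a boundary sum) expresses the old group as the new group, or a free product of the new groups, with a free group adjoined. This is visible from the explicit forms $\pi\cong\sigma*\Z$ and $\pi\cong\breve\pi*\Z$, and from the fact that the sharp splittings are free product decompositions. With that checked, Grushko uniqueness finishes the argument.
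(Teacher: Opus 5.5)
Your proposal is correct and is essentially the paper's argument. The corollary is given without proof because it is read off from the decomposition produced in the proof of Theorem \ref{asph wellb}, and you have made explicit the Kurosh/Grushko uniqueness step that identifies an arbitrary indecomposable factor with either a $\Z$ factor or one of the one-ended pieces $G_i$. (A minor notational point: the $\Omega$ for the pair of groups should be the $G_i$-set $\sqcup_j G_i/\mathrm{Im}(\kappa_j)$ determined by $\mathcal{K}_i$, not $\mathcal{K}_i$ itself.)
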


\begin{cor}
If $P$ is aspherical then  $\pi=\pi_1(P)$ has a maximal free factor of
rank $n=\beta_1(\Gamma(\pi;\{\kappa_j\}))$.
\qed
\end{cor}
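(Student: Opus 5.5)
The plan is to read off the free factor from the decomposition in the proof of Theorem \ref{asph wellb}. I also want to compare ranks through the Kurosh subgroup theorem and Grushko's theorem.

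First, the reductions in that proof. Each step that removes a non-separating curve in the kernel of some $\kappa_j$ splits off a free factor $\Z$ from $\pi$, with $\pi\cong\sigma*\Z$. Each step that splits a boundary component $S_j$ along a separating curve whose vertex does not disconnect $\Gamma$ also splits off one free factor $\Z$, lowering $\beta_1(\Gamma)$ by one. The process stops at a system $(\omega,\{\lambda_\ell\})$ whose graph is a tree. It is then an iterated boundary sum of $PD_3$-pairs of groups $(G_i,\mathcal{K}_i)$, and each $G_i$ has one end. Hence
\[
\pi\cong(*_iG_i)*F(m),
\]
where $m$ counts the handles removed, and every $G_i$ is indecomposable and not $\Z$.

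Second, I check that $m=\beta_1(\Gamma(\pi;\{\kappa_j\}))$. For this I track how $\beta_1$ of the graph changes at each move, using the definition of $\Gamma$, whose edges are the $PD_2$ free factors of $\mathrm{Im}(\kappa_j)$.

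\begin{itemize}
\item Surgery on a non-separating curve removes a $\Z$ factor. The piece $(\Z,E)$ contributes no vertex in $I$, since $\Z$ is free rather than one-ended. The $W$-summand of $S_j$ has image $\Z$, which is not a $PD_2$-group, so it contributes no edge. I must argue that these steps leave $\beta_1$ unchanged, so that they correspond to factors not counted in $\Gamma$.
\item Alternatively, I cite \cite[Lemma 13.5]{PDD3} directly. Once the non-separating kernels are removed, it gives $\pi\cong G*F(n)$ with $n=\beta_1(\Gamma)$, and Lemma \ref{asph pair} gives connectivity.
\end{itemize}

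Third, maximality. Suppose $\pi=A*F$ with $F$ free of rank $k$. By the Kurosh subgroup theorem, each one-ended factor $G_i$ is conjugate into $A$. Grushko's theorem then gives $\mathrm{rank}$ comparisons that force $k\le m$. Hence a free factor of rank $m$ is maximal.

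The main obstacle is the bookkeeping in the second step. The free factors split off by the non-separating-curve surgeries appear to add to the free rank of $\pi$ without changing $\Gamma$. In that case the free rank would exceed $\beta_1(\Gamma)$, unless each such surgery also changes $\Gamma$. It does change $\Gamma$ when $\mathrm{Im}(\kappa_j)$ loses or gains $PD_2$ factors. I would first try to show that in the original system each handle corresponds to a cycle in $\Gamma$, through the way \cite[Lemma 13.8]{PDD3} rewrites $\kappa_j$. Failing that, I would restrict the claim to the situation after those reductions, where \cite[Lemma 13.5]{PDD3} applies verbatim.
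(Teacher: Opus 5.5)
The gap is your Step~2, and it cannot be closed. The difficulty you flag in your last paragraph is real, and it makes the corollary false as worded.

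Surgery on a non-separating curve in $\mathrm{Ker}(\kappa_j)$ leaves $\Gamma$ unchanged. By \cite[Lemma 13.8]{PDD3} one has $\pi\cong\sigma*\Z$ and $\mathrm{Im}(\kappa_j)\cong\mathrm{Im}(\kappa_j')*\Z$. So the one-ended factors of $\pi$ (the vertex set $I$) are those of $\sigma$, and the $PD_2$-factors of $\mathrm{Im}(\kappa_j)$ (the edges at $j$) are those of $\mathrm{Im}(\kappa_j')$. Yet the free rank of $\pi$ goes up by one. Hence your first fallback, matching each such handle with a cycle of $\Gamma$, cannot succeed. Your first bullet (these steps leave $\beta_1$ unchanged) is true, but it works against the equality $m=\beta_1(\Gamma)$, not for it.

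Concretely, take the solid torus $D^2\times S^1$. Here $\pi=\Z$, $I=\emptyset$ and $\Gamma$ is a single vertex, so $\beta_1(\Gamma)=0$, while $\pi$ is itself free of rank $1$. If you want $\pi$ not free, take the boundary connected sum of $T^2\times[0,1]$ with $D^2\times S^1$. Then $P\simeq T^2\vee S^1$ is aspherical and $\pi\cong\Z^2*\Z$. The boundary components are $T$ and $T\#T$, with images $\Z^2$ and $\Z^2*\Z$, so $\Gamma$ is a path with two edges and $\beta_1(\Gamma)=0$. But the maximal free factor of $\pi$ has rank $1$.

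What is true is your second fallback, and that is all the paper's argument supports. The corollary is read off from the point in the proof of Theorem~\ref{asph wellb} where \cite[Lemma 13.5]{PDD3} is applied, which comes \emph{after} the non-separating curves have been removed. So add the hypothesis that no $\kappa_j$ kills a non-separating simple closed curve, so that each $\mathrm{Im}(\kappa_j)$ is a free product of $PD_2$-groups. Then:
\begin{enumerate}
\item \cite[Lemma 13.5]{PDD3} gives $\pi\cong G*F(n)$ with $n=\beta_1(\Gamma)$;
\item the remaining (separating-curve) reductions show that $G$ is a free product of one-ended groups;
\item your Step~3 (Grushko) then shows that $F(n)$ is a maximal free factor.
\end{enumerate}
With that hypothesis this is a complete argument, and it is exactly how the paper obtains the corollary.

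Without that hypothesis, your own bookkeeping gives the correct statement. The rank of a maximal free factor is $\beta_1(\Gamma)+\sum_j t_j$, where $t_j$ is the rank of the free part of a Grushko decomposition of $\mathrm{Im}(\kappa_j)$. Equivalently, $\sum_j t_j$ is the number of $\Z$ factors split off in the first stage of the reduction. You should state and prove that version rather than try to force $m=\beta_1(\Gamma)$.
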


This is not true if $P$ is the internal connected sum of a $PD_3$-pair 
with a $PD_3$-complex with free fundamental group.

\begin{cor}
The First and Second Decomposition Theorems of Bleile \cite{Bleile} 
hold for all aspherical $PD_3$-pairs.
\qed
\end{cor}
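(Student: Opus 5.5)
The plan is to read both decomposition theorems off the structure produced in the proof of Theorem \ref{asph wellb}, and to get the uniqueness parts from the Classification Theorem of Hendriks. I take the First Decomposition Theorem to say that an indecomposable factor of the peripheral system is realized by a sub-pair, with the pair splitting as an (interior) connected sum along the free product splitting of $\pi$. I take the Second to say that a pair whose peripheral system splits as a boundary sum is a boundary connected sum of pairs realizing the components. Bleile proved these under extra hypotheses (essentially those of \cite[Chapter 13]{PDD3}, such as $\pi_1$-injectivity or a large free factor). The task is to check that, for aspherical $P$, Theorem \ref{asph wellb} supplies exactly what her arguments used those hypotheses for.

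First, since $P$ is aspherical, either $(P,\partial P)\simeq(D^3,S^2)$ or $\partial P$ is aspherical, by Lemma \ref{asph pair}. Hence the homotopy type of the pair is determined by its peripheral system $(\pi,\{\kappa_j\})$. Consequently any splitting of the group system, as a sharp sum or as a boundary sum, is realized by aspherical $CW$-pairs $(U_i,V_i)$. Each $(U_i,V_i)$ is the mapping cylinder of a map from the relevant surfaces to $K(G_i,1)$, and the original pair is recovered from these by a connected sum or by adding $1$-handles. By \cite[Theorem 8]{DH3}, applied once per $1$-handle exactly as in the proof of Theorem \ref{asph wellb}, each $(U_i,V_i)$ is then a $PD_3$-pair. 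This gives existence in both theorems.

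Second, by the argument of Theorem \ref{asph wellb} and the first two corollaries above, every aspherical pair is an iterated boundary connected sum and $1$-handle addition of pairs of groups $(G_i,\Omega_i)$ with $G_i$ one-ended, together with copies of $(D^3,S^2)$. These copies account for the free factor of rank $\beta_1(\Gamma(\pi,\{\kappa_j\}))$. There are no interior connected summands with empty boundary, since these would contribute a non-aspherical summand. The pieces correspond to the Grushko factors of $\pi$. By Kurosh uniqueness, the one-ended factors $G_i$ are determined up to conjugacy, and the rank of the free part is determined. The peripheral subsystems $\Omega_i$ are determined by the edges of the tree $\Gamma(\pi,\{\kappa_j\})$ at the vertex $i$, once the non-separating curves and then the separating curves in the kernels have been cut as in that proof.

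The main obstacle is uniqueness of the decomposition up to homotopy equivalence of the pieces. Different choices of the curves $\gamma$ given by the Algebraic Loop Theorem could a priori give non-isomorphic subsystems $(G_i,\Omega_i)$. To handle this I would first try the tree structure of $\Gamma$ from Lemma \ref{asph pair}: once all non-separating kernel curves are cut, each boundary vertex $j$ meets a determined set of $G_i$-vertices. The images $\kappa_j(\pi_1(S_{jk}))$ are the $PD_2$ free factors of $\mathrm{Im}(\kappa_j)$, and these are unique up to conjugacy by Kurosh. Hence the $\Omega_i$ are determined up to conjugacy, which is isomorphism of group systems. The fundamental classes then restrict compatibly, since the classes $\mu_i$ are the images of $\mu$ under the decomposition of $H_3(\pi,\{\kappa_j\};\Z^w)$. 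Hendriks' Classification Theorem therefore identifies the pieces up to homotopy equivalence, which finishes the proof.
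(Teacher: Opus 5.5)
Your boundary-sum argument is sound. It is essentially the mechanism behind the paper's one-line deduction from Theorem~\ref{asph wellb}. You realize the two components by aspherical mapping-cylinder pairs and attach one $1$-handle. This gives an aspherical pair with the same peripheral system as $(P,\partial P)$, hence homotopy equivalent to it, and then \cite[Theorem 8]{DH3} applies.

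The gap is in the interior (sharp) case, which is your reading of the First Decomposition Theorem. You assert that the original pair is recovered from the aspherical pieces $(U_i,V_i)$ ``by a connected sum'', and that \cite[Theorem 8]{DH3} then makes each piece a $PD_3$-pair. Neither step is valid.
\begin{enumerate}
\item \cite[Theorem 8]{DH3} only concerns attaching $1$-handles along $S^0\times D^2\subset V$, so it says nothing about interior connected sums.
\item An interior connected sum of two pairs with non-empty boundary is not aspherical; for manifolds, $M_1\#M_2\simeq M_1\vee M_2\vee S^2$. So it cannot be homotopy equivalent to $P$.
\end{enumerate}
What asphericity really gives from a sharp splitting $(\pi,\{\kappa_j\})\cong(G_1,\mathcal{K}_1)\sharp(G_2,\mathcal{K}_2)$ is $(P,\partial P)\simeq(U_1\vee U_2,V_1\sqcup V_2)$. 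That is not a $1$-handle addition. The last paragraph of the proof of Lemma~\ref{asph pair} shows such a pair cannot satisfy Poincar\'e duality unless the splitting is trivial. Equivalently, if both $(U_i,V_i)$ were $PD_3$-pairs, then $\chi(\partial P)=2\chi(G_1)+2\chi(G_2)=2\chi(\pi)+2$, whereas $\chi(\partial P)=2\chi(P)=2\chi(\pi)$. So in the sharp case your argument would produce $PD_3$-pairs that cannot exist.

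The missing observation is this: an aspherical pair is never a non-trivial interior connected sum, and its peripheral system never splits sharply in a non-trivial way. Hence the interior-sum theorem holds vacuously for aspherical pairs. All the content lies in the boundary-sum theorem, where your $1$-handle argument does work.

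Two smaller points. First, the free factor $F(n)$, with $n=\beta_1(\Gamma(\pi,\{\kappa_j\}))$, comes from the $1$-handles, not from the copies of $(D^3,S^2)$, which have trivial fundamental group. Second, in your uniqueness step you need not decompose $\mu$ or check compatibility of fundamental classes. Each piece is aspherical, so its homotopy type is already determined by its peripheral group system alone, once you have identified that system.
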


Theorem \ref{asph wellb} also provides an analogue of the 3-manifold Loop Theorem,
in the following sense.
If $(P,\partial{P})$ is an aspherical  $PD_3$-pair with $\partial{P}$ a union of closed surfaces
and there is a simple closed curve $\gamma$ on $\partial{P}$ which is null-homotopic in $P$ 
then the pair is the result of adding a 1-handle to another $PD_3$-pair.
(We are not claiming here that the other pair is the result of an elementary surgery on $\gamma$ !)

\section{when does the group determine the pair?}

If $G$ is a finitely generated group then there are only finitely many homeomorphism types 
of bounded, compact, irreducible orientable 3-manifolds $M$ such that $\pi_1(M)\cong{G}$, 
and only finitely many group systems which are peripheral systems of 3-manifold pairs \cite{Sw80}.
We shall show that the latter finiteness result holds also for the peripheral systems of 
aspherical $PD_3$-pairs $(P,\partial{P})$ such that $\partial{P}$ is non-empty,
provided that $\chi(\pi)=1-n$, 
where $\pi_(P)$ has $n$ irreducible factors.

If $(X,Y)$ is a $PD_3$-pair then $\chi(Y)=2\chi(X)$,  $\beta_1(Y)\leq2\beta_1(X)$ and
\[
\chi(X)\leq\beta_0(Y;\mathbb{F}_2)\leq1+\beta_2(X;\mathbb{F}_2)=\chi(X)+\beta_1(X;\mathbb{F}_2),
\]
by Poincar\'e duality and the exact sequence of homology.
If the components of $Y$ are aspherical then $\chi(X)\leq0$,  
and so $1\leq\beta_0(Y;\mathbb{F}_2)\leq\beta_1(\pi;\mathbb{F}_2)$.
Hence (given $\pi$) there are only finitely many possibilities for $Y$.
Thus we may assume $Y$ is given.

A $PD_3$-pair of groups $(G,\Omega)$ is {\it of Seifert type\/} if $\sqrt{G}\not=1$.
It is {\it properly\/} of Seifert type if $G$ is not polycyclic.
If $\Omega=\emptyset$ then $G\cong\pi_1(M)$ for some aspherical closed 3-manifold $M$  \cite{Bowd}.
If $(G,\Omega)$ is properly of Seifert type and $\Omega$ is non-empty then 
$\cd{G}=2$, $\zeta{G}\cong\Z$,  $G'$ is free and $G'\cap\zeta{G}=1$ \cite{[Bi]}.
Moreover,  we then have $G=\pi\mathcal{G}$ for some graph of groups $(\Gamma, \mathcal{G})$
with all edge and vertex groups $\Z$ \cite{St76}.

\begin{lemma}
If $r\geq1$ and $s=\lfloor\frac{r}2\rfloor$ then there are $s+1$ orientable and
$r$ non-orientable $PD_2$-pairs $(X,Y)$ with $\pi_1(X)\cong{F(r)}$.
\end{lemma}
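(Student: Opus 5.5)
Since $PD_2$-pairs with nonempty boundary (the case $\pi_1\cong F(r)$, $r\geq1$, forces $Y\neq\emptyset$, because closed $PD_2$-complexes have non-free $\pi_1$ except $S^2$) are homotopy equivalent to compact surfaces with boundary, I would reduce the count to the classification of compact surfaces. The first step is to recall that every $PD_2$-pair $(X,Y)$ with $Y$ non-empty is homotopy equivalent, as a pair, to a compact surface $F$ with $\partial F\neq\emptyset$. One route is to double $(X,Y)$ along $Y$ to get a $PD_2$-complex $DX$. By Eckmann--M\"uller--Linnell, $DX$ is homotopy equivalent to a closed surface. Then $Y$ is a finite disjoint union of circles (a $PD_1$-complex is $\simeq S^1$), and the pair is determined by its peripheral system $(F(r),\{\kappa_j\})$ with each $\kappa_j:\Z\to F(r)$. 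Alternatively I would cite the corresponding statement for $PD_2$-pairs in \cite{PDD3} directly. Thus the question becomes: how many compact surfaces $F$ with non-empty boundary have $\pi_1(F)\cong F(r)$?

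Next I would use the classification. A compact orientable surface of genus $g$ with $b\geq1$ boundary circles has $\chi=2-2g-b$ and free fundamental group of rank $r=1-\chi=2g+b-1$. A non-orientable surface with $h\geq1$ crosscaps and $b\geq1$ boundary circles has $r=h+b-1$. The homotopy type of the pair $(F,\partial F)$ (indeed its homeomorphism type) is determined by orientability, $\chi$ and $b$. This is because homotopy equivalences of pairs preserve $w_1$, $\chi$ and the number of components of $Y$, and conversely surfaces with the same invariants are homeomorphic.

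It remains to count solutions. In the orientable case we need $g\geq0$ and $b=r+1-2g\geq1$, i.e.\ $0\leq g\leq\lfloor r/2\rfloor=s$, giving $s+1$ pairs. In the non-orientable case we need $h\geq1$ and $b=r+1-h\geq1$, i.e.\ $1\leq h\leq r$, giving $r$ pairs.

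The only non-routine point is the first step: that $PD_2$-pairs are surfaces up to homotopy, and that distinct $(g,b)$ or $(h,b)$ give non-equivalent pairs. The latter is immediate from the invariants $w_1$, $\chi$ and $\beta_0(Y)$. For the former I would lean on the known result that $PD_2$-complexes are surfaces, applied to the double, together with Hendriks' classification of pairs by fundamental triples.
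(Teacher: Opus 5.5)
Your count is correct, and the bookkeeping is fine: orientable $0\le g\le s$ with $b=r+1-2g$, non-orientable $1\le h\le r$ with $b=r+1-h$, and pairs distinguished by $w_1$ and $\beta_0(Y)$. The route differs from the paper's in where the work sits.

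\emph{How the paper argues.} It first constrains $b=\beta_0(Y)$ intrinsically:
\begin{enumerate}
\item $b\ge1$ because $\pi$ is free;
\item $b\le r+1$, by capping off boundary components one at a time (each cap lowers $b$ and $r$ by one) and looking at the resulting closed $PD_2$-complex;
\item $b\equiv r+1$ mod $(2)$ in the orientable case, since an orientable $PD_2$-complex has even $\beta_1(-;\mathbb{F}_2)$.
\end{enumerate}
Only then does it appeal to \cite{EM80}, both to say that these values are realized by punctured surfaces and that there are no other pairs.

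\emph{How you argue.} You reduce everything to compact surfaces first, and read off both the admissible values and uniqueness from the classification of surfaces. Both arguments rest on the same input: the Eckmann--M\"uller theorem that a $PD_2$-pair with non-empty boundary is a surface pair. The paper's homological constraints do not replace it, since they give necessary conditions on $(w,b)$, not uniqueness. Your version simply packages everything into that one step.

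\emph{The step to fix.} Your primary justification of that reduction, via the double, does not work as stated. Knowing that $DX=X\cup_YX$ is homotopy equivalent to a closed surface $\Sigma$ only tells you that a certain graph of groups has surface-group fundamental group. That graph has two vertex groups $F(r)$ and $b$ cyclic edge groups.

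To conclude that $(X,Y)$ is a surface pair you would still need this splitting to be realized by $b$ disjoint simple closed curves on $\Sigma$, cutting it into two pieces homotopy equivalent to $X$. Equivalently, the classes $\kappa_j(1)$ must be, up to $\mathrm{Aut}(F(r))$ and conjugacy, the boundary classes of a surface. That is a genuine further theorem about cyclic splittings of surface groups, and you have not supplied it.

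Hendriks' classification does not help here. It is a theorem about $PD_3$-pairs. In this aspherical 2-dimensional setting, the fact that the pair is determined by the conjugacy classes of its boundary elements is elementary (mapping cylinders). The real issue is realizing that boundary system by a surface, not classifying pairs.

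Your fallback, citing the $PD_2$-pair result directly (i.e.\ \cite{EM80}, as the paper does), closes this gap. Use that in place of the doubling argument.
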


\begin{proof}
Let $b=\beta_0(Y)$.
Then $b>0$,  since $\pi_1(X)$ is free, 
and $b\leq{r+1}$, by a simple homological argument.
Adding $D^2$ along one component of $Y$ reduces each of $b$ and $r$ by 1.
If $b=1$ then capping off $Y$ gives a $PD_2$-complex $X/Y$ with
with $\beta_1(X/Y;\mathbb{F}_2)=r$.
Hence $b\leq{r+1}$.
If $(X,Y)$ is orientable then $\beta_1(X/Y;\mathbb{F}_2)$ is even and so $b\equiv{r+1}~mod~(2)$.
All of these possibilities may be realized by punctured surfaces, 
and there are no others \cite{EM80}.
 \end{proof}

Taking products with $S^1$ gives examples of $PD_3$-pairs which are not determined
by the ambient group $\pi$.
In particular,
if $T_o$ and $D_{oo}$ are the punctured torus and the twice-punctured disc, 
then $\pi_1(T_o\times{S^1})\cong\pi_1(D_{oo}\times{S^1})\cong{F(2)}\times\mathbb{Z}$.

\begin{lemma}
\label{virtualisom}
Let $G$ be a  group and $\Omega$ a left $G$-set with finitely many orbits.
Suppose that $H<G$ is a normal subgroup of finite index.
Then there are only finitely many $G$-isomorphism classes of left $G$-sets $\Omega'$
such that $\Omega'|_H$ is $H$-isomorphic to $\Omega|_H$.
\end{lemma}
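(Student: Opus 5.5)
Every left $G$-set with finitely many orbits is a disjoint union of transitive $G$-sets $G/K$, and $G/K\cong G/K'$ exactly when $K$ and $K'$ are conjugate in $G$. The plan is to show first that the orbits of $\Omega'$ are limited in number, and then that each orbit has only finitely many possibilities up to $G$-isomorphism. Suppose $\Omega'|_H\cong\Omega|_H$. Each $G$-orbit $G/K$, restricted to $H$, splits into $|G:HK|$ $H$-orbits, and there is at least one and at most $|G:H|$ of them. So the number of $H$-orbits of $\Omega|_H$ is finite (at most $|G:H|$ times the number of $G$-orbits of $\Omega$). Consequently the number of $G$-orbits of $\Omega'$ is bounded by that number. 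It then suffices to show that, for a fixed finite $H$-set of orbit types, only finitely many transitive $G$-sets $G/K$ have $(G/K)|_H$ isomorphic to a sub-$H$-set of $\Omega|_H$. Once this is known, $\Omega'$ is a union of boundedly many orbits drawn from a finite list, and there are only finitely many such unions.

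For a transitive $G/K$, the $H$-orbits are $H gK\cong H/(H\cap gKg^{-1})$, since $H$ is normal. In particular $H\cap K$ is determined up to $H$-conjugacy by one of the finitely many $H$-orbit types $H/L$ occurring in $\Omega|_H$. After conjugating $K$ by an element of $H$, we may therefore assume $H\cap K=L$ for one of finitely many fixed subgroups $L$ (one representative per $H$-orbit of $\Omega$). Then $K$ normalizes $L$, since $H$ is normal and $K$ normalizes $H\cap K$. Moreover $K/L$ embeds in $G/H$, which is finite.

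Hence $K$ lies between $L$ and $N_G(L)$, and $K/L$ is a subgroup of $N_G(L)/L$ meeting $(N_G(L)\cap H)/L$ trivially. The step I expect to be the main obstacle is that $N_G(L)/L$ may be infinite, so there can be infinitely many such $K$. The resolution is to count conjugacy classes rather than subgroups. The map $K\mapsto KH/H$ takes finitely many values, since $G/H$ is finite. Fix the image $Q=KH/H$; then $K/L$ is a complement to $N_H(L)/L$ in the preimage $E$ of $Q$ in $N_G(L)/L$, an extension of the (possibly infinite) group $N_H(L)/L$ by the finite group $Q$. Finiteness of conjugacy classes of complements is not automatic for infinite kernels. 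However, the $H$-structure is also fixed: all the $H$-orbits $H/(H\cap gKg^{-1})$ for $g$ ranging over coset representatives of $G/H$ must match those in $\Omega|_H$.

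I would try first to avoid cohomology altogether. Instead, use that $G/K$ is recovered from $(G/K)|_H$ together with the action of finitely many coset representatives $g_1,\dots,g_m$ of $H$ in $G$. Each $g_i$ acts as a bijection of the finite-orbit $H$-set $X=\Omega|_H$ that is $c_{g_i}$-twisted equivariant. Such a map is determined by the images of the finitely many orbit base points, together with a permutation of the orbits. These images can be chosen up to the action of $\mathrm{Aut}_H(X)$, which transports the structure to an isomorphic $G$-set. Making this count finite up to $\mathrm{Aut}_H(X)$ is the delicate point, and I expect this is where the normality of $H$ and finiteness of the orbit count must be used carefully.
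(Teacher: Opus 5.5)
Your reductions are correct, and more careful than the paper's. The number of $G$-orbits of $\Omega'$ is bounded by the number of $H$-orbits of $\Omega|_H$. After $H$-conjugation, $K\cap H=L$ for one of finitely many $L$. Then $L\le K\le N_G(L)$ with $K/L\hookrightarrow G/H$. But the proposal stops at the step that carries all the content. You never show that, for fixed $L$ and $Q=KH/H$, only finitely many $G$-conjugacy classes of such $K$ occur. Neither of your suggestions closes this. The observation that the $H$-structure is fixed gives no constraint when $HK=G$: then $(G/K)|_H$ is the single orbit $H/L$ whatever $K$ is. Counting $c_{g_i}$-twisted equivariant bijections modulo $\mathrm{Aut}_H(X)$ is just a restatement of the same count of conjugacy classes of complements to $N_H(L)/L$.

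This gap cannot be filled, because the statement is false in this generality. Let $A=\bigoplus_{\mathbb{N}}\Z/2$, $G=A\times\Z/2$, $H=A\times 0$, and $K_a=\{(0,0),(a,1)\}$ for $a\in A$. Every $G/K_a$ restricts to the regular $H$-set. But $G$ is abelian, so the $G/K_a$ are pairwise non-isomorphic, and with $\Omega=G/K_0$ there are infinitely many $\Omega'$.

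Finite presentability does not rescue it. Take $G=F(2)\times\Z/2$, $H=F(2)\times 0$, and $L$ the kernel of an epimorphism $F(2)\to\Z/2\wr\Z$. For an involution $x$ of $\Z/2\wr\Z$ with lift $\tilde x$, the subgroups $K_x=(L\times\{0\})\cup(\tilde{x}L\times\{1\})$ all satisfy $(G/K_x)|_H\cong H/L$. Yet $G/K_x\cong G/K_y$ only if $x$ and $y$ are conjugate, and the lamplighter group has infinitely many conjugacy classes of involutions.

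The paper's proof has the same hole. Its count $nk=nk'$ assumes the orbits $Hg_ix_j$ are distinct, i.e.\ that point stabilizers lie in $H$. This already fails for $\Omega=G/H$ against the trivial $G$-set with $n$ points. The passage to $\tilde f(hg_ix_j)=hg_ix'_j$ tacitly assumes $\mathrm{Stab}_G(x_j)=\mathrm{Stab}_G(x'_j)$, which is what had to be proved.

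An extra hypothesis is needed. One that works is that $N_H(L)/L$ is finite for every point stabilizer $L$ of $\Omega|_H$; this holds, for instance, if the point stabilizers of $\Omega$ are their own commensurators in $G$. With it your argument finishes immediately, since $N_G(L)/L$ is then finite and only finitely many subgroups lie between $L$ and $N_G(L)$.
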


\begin{proof}
Suppose that  $[G:H]=n$, with coset representatives $\{g_1,\dots,g_n\}$.
If  $\Omega=\sqcup_{j=1}^kGx_j$ and  $\Omega'=\sqcup_{j=1}^{k'}Gx'_j$
then $\Omega|_H=\sqcup{H}g_ix_j$ and $\Omega'|_H=\sqcup{H}g_ix'_j$. 
Hence $nk=nk'$ and so $k=k'$. 
An $H$-isomorphism $f:\Omega|_H\cong\Omega'|_H$ may permute the indices, 
but up to a finite ambiguity we may assume that $f(g_ix_j)=g_ih_{i,j}x'_j$, for some $h_{i,j}\in{H}$.
The bijection $hg_ih_{i,j}x'_j\mapsto{h}g_ix'_j$ is an $H$-isomorphism, 
and so we may replace $f$ by an $H$-isomorphism $\tilde{f}$
such that $\tilde{f}(hg_ix_j)=hg_ix'_j$ for all $h\in{H}$.
Then  $\tilde{f}$ is a $G$-isomorphism, since $G=\sqcup_{i=1}^nHg_i$.
\end{proof}

Some features of \cite[Theorem 8.8]{PDD3} are used in the next lemma.

\begin{lemma}
\label{seifert}
If $(G,\Omega)$ is of Seifert type then there are only finitely many
distinct $PD_3$-pairs of groups $(\widehat{G},\widehat\Omega)$
with $\widehat{G}\cong{G}$,  modulo automorphisms of $G$.
\end{lemma}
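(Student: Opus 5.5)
We split into the case $\Omega=\emptyset$ and the case $\Omega\neq\emptyset$, using $\sqrt{G}\neq1$ and the structure results quoted above. A $PD_3$-pair of groups $(\widehat{G},\widehat\Omega)$ is determined by the group $\widehat{G}\cong G$ together with the $\widehat{G}$-set $\widehat\Omega$, i.e.\ a finite family of conjugacy classes of $PD_2$-subgroups. Since $\sqrt{\widehat{G}}\cong\sqrt{G}$ is characteristic, the new pair is again of Seifert type. We must therefore show that, modulo $\mathrm{Aut}(G)$, only finitely many peripheral families $\widehat\Omega$ occur.

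\textbf{Case $\Omega=\emptyset$.} Then $\widehat\Omega$ would be a set of proper $PD_2$-subgroups making $G$ a $PD_3$-pair group. If $G$ is a $PD_3$-group, then $H^3(G;\Z[G])\neq0$. But a $PD_3$-pair of groups with non-empty boundary has $\cd\leq2$. Hence $\widehat\Omega=\emptyset$, and this case is trivial.

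\textbf{Case $\Omega\neq\emptyset$ and $G$ properly of Seifert type.} By \cite{[Bi]}, $\zeta G\cong\Z$ and $G'$ is free with $G'\cap\zeta G=1$. Every boundary subgroup $\kappa\in\widehat\Omega$ is a $PD_2$-subgroup; since $G$ is not polycyclic, each $\kappa$ should meet $\zeta G$ nontrivially and be $\Z^2$ or $\pi_1(Kb)$. The argument I expect to use is the one in \cite[Theorem 8.8]{PDD3}: the image of a peripheral subgroup in $G/\zeta G$ is a cyclic subgroup (possibly infinite dihedral), which is a boundary subgroup of the base 2-orbifold group.

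So pass to $Q=G/\zeta G$, or, if $w$ is nontrivial on $\zeta G$, to the normal subgroup of index at most $2$. The group $Q$ is virtually free, and the peripheral data becomes a $PD_2$-orbifold-pair structure on $Q$: a finite set of conjugacy classes of maximal cyclic (or $D_\infty$) subgroups. By the Lemma on $PD_2$-pairs (and \cite{EM80}), for a virtually free group there are only finitely many such orbifold pair structures up to automorphism. Passing to a torsion-free free subgroup $F$ of finite index reduces this to the count of punctured surfaces with $\pi_1\cong F(r)$ given above.

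Lifting back to $G$ introduces the main obstacle. For a fixed boundary pattern in $Q$, the peripheral $\Z^2$'s in $G$ are determined as preimages of the cyclic subgroups. Recovering $\widehat\Omega$ from its restriction to a finite-index normal subgroup is exactly what Lemma \ref{virtualisom} controls: given $\widehat\Omega|_H$, only finitely many $G$-sets $\widehat\Omega$ restrict to it. Here $H$ is the preimage of a torsion-free finite-index subgroup, which can be taken normal in $G$. One also needs that automorphisms of $Q$ realizing the equivalences lift to automorphisms of $G$, up to finite ambiguity. I would try to get this lifting from the splitting $G\cong\pi\mathcal{G}$ with all edge and vertex groups $\Z$ \cite{St76}: automorphisms of the underlying graph of groups respecting the fibre lift directly.

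\textbf{Case $G$ polycyclic.} Then $G$ is $\Z^2$, $\pi_1(Kb)$, or $\Z\times\Z$-by-small, as for $T\times I$, $Kb\tilde\times I$ and solid-torus-like pairs. Here the finiteness is checked directly from the short list in \cite{PDD3}.
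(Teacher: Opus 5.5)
Your skeleton (find a finite-index subgroup of the form (free)$\times\Z$, count boundary patterns there with the punctured-surface lemma, and climb back with Lemma~\ref{virtualisom}) is the paper's. Your $H$, the preimage of a free $F$, is essentially the paper's $NA$. The difference is that the paper reaches that subgroup directly, and your detour through $Q=G/\zeta G$ is where the argument goes wrong.

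The right object is $A=\sqrt{G}\cong\Z$, not $\zeta G$. If some element inverts the fibre then $\zeta G=1$. The index-$\leq2$ subgroup you need is $C=C_G(A)$, the kernel of the action on $A$; it is not governed by $w$. The paper uses that $C'$ is free with $A\cap C'=1$ to choose $\psi:C\to\Z$ injective on $A$. Then $N=\mathrm{Ker}(\psi)$ has $NA\cong N\times\Z$ of finite index in $G$, so $N$ is finitely generated, hence free \cite[Corollary 8.6]{[Bi]}. A $PD_3$-pair structure on $N\times\Z$ is then a $PD_2$-pair structure on $N$ times $\Z$. This needs neither virtual freeness of $Q$ nor an ``orbifold pair'' structure on it, both of which you only assert. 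Also, the fact that each boundary group $S$ meets the fibre has a one-line proof you omit: otherwise $AS\cong A\rtimes S$ would have cohomological dimension $3>\cd G$.

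The genuine gap is your claim that a virtually free $Q$ has only finitely many orbifold pair structures up to automorphism. The $PD_2$-pair lemma is about free groups, and passing to a free normal subgroup $F$ of finite index does not reduce the claim to it. That lemma gives finiteness only modulo $\mathrm{Aut}(F)$, while Lemma~\ref{virtualisom} only controls $Q$-sets whose restrictions to $F$ are $F$-isomorphic on the nose. Combining them requires automorphisms of $F$ to extend to $Q$ up to finite ambiguity. That fails in general, and you do not address it.

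The automorphism issue you do raise is lifting automorphisms of $Q$ to $G$, offered only as a hope via \cite{St76}. That is actually the tractable direction. Since $A$ lies in every boundary group, $\widehat\Omega$ is really a $Q$-set. The automorphisms of $Q$ that preserve the action on $A$ and the extension class (up to sign) lift, and they form a subgroup of finite index, because $Q$ is finitely generated and $H^2(Q;\Z^{\pm})$ is finite.

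For what it is worth, the paper's proof also passes from ``finitely many $PD_3$-pairs with ambient group $N\times\Z$'' to $G$ by a bare citation of Lemma~\ref{virtualisom}, without spelling out this reconciliation of automorphisms. So you will not find the missing step there either. But as written, your proposal does not prove the lemma.
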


\begin{proof}
We may assume that $(G,\Omega)$ is properly of Seifert type,
and so $A=\sqrt{G}\cong\Z$.
We may also assume that $\Omega$ is non-empty, and so $\cd{G}=2$.
If $S$ is a boundary component then $A\cap{S}\not=1$, for otherwise $c.d.AS=3$.
Since $A\cap{S}\cong\mathbb{Z}$ is normal in $S$,
it follows that $S\cong\mathbb{Z}^2$ or $Kb$.
We also have $[G:S]=\infty$, since $G$ is not polycyclic.

Let $C=C_G(A)$. 
Then $[G:C]\leq2$, and $C'$ is a nonabelian free group,
since $A$ is central in $C$ and $G$ is properly of Seifert type \cite[Theorem 8.8]{[Bi]}.
Hence $A\cap{C}=1$, and so there is an epimorphism $\psi:C\to\Z$ 
which is injective on $A$.
Let $N=\mathrm{Ker}(\psi)$.
Then $NA\cong{N}\times\Z$ has finite index in $G$ and so
$N$ is finitely generated.
Hence $N$ is also free \cite[Corollary 8.6]{[Bi]}.

If $(G,\Omega)$ is a $PD_3$-pair of groups
then $N$ is the ambient group of a $PD_2$-pair of groups 
$(N,\Psi)$, and $\alpha$ preserves the boundary.
Since there are only finitely $PD_2$-pairs $(N,\Psi)$
with $\pi_1(N)\cong{F(r)}$, 
it follows that there are only finitely many $PD_3$-pairs 
with ambient group $NA\cong{N}\times\Z$.
Hence there are only finitely many $PD_3$-pairs of groups with ambient group $G$,
by Lemma \ref{virtualisom}.
\end{proof}

A $PD_3$-pair of groups $(G,\Omega)$ is {\it atoroidal\/} if $G$ has one end,
is not virtually $\Z^2$ and every subgroup $H<G$ such that $H\cong\Z^2$ or 
$Kb$ fixes a point of $\Omega$.


\begin{lemma}
\label{atoroidal}
Let $(G,\Omega)$ be an atoroidal $PD_3$-pair of groups such that $\chi(G)=0$.
If $(G,\widehat\Omega)$ is another $PD_3$-pair of groups then 
$(G,\widehat\Omega)\cong(G,\Omega)$.
\end{lemma}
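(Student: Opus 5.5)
The plan is to show that the peripheral set $\widehat\Omega$ is forced to be $\Omega$, up to $G$-isomorphism, by identifying the peripheral subgroups intrinsically in $G$. The tool is the JSJ decomposition for $PD_3$-pairs of groups of Kropholler and Castel, as used in \cite[Chapter 3]{PDD3}. Atoroidality is what should make this intrinsic.

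\emph{Step 1: the new boundary consists of tori and Klein bottles.} Let $\{\widehat S_k\}$ be representatives of the stabilizers of $\widehat\Omega$. Since $(G,\widehat\Omega)$ is a $PD_3$-pair of groups, $G$ has one end. Counting Euler characteristics gives $\chi(\partial)=2\chi(G)=0$. Every boundary surface group $\widehat S_k$ is aspherical, so $\chi(\widehat S_k)\leq0$, and the sum is $0$. Hence each $\widehat S_k$ has $\chi=0$, and so $\widehat S_k\cong\Z^2$ or $\pi_1(Kb)$. Since $\Omega$ already satisfies this by the same argument, both peripheral structures consist of subgroups isomorphic to $\Z^2$ or $\pi_1(Kb)$.

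\emph{Step 2: each new peripheral subgroup is contained in an old one.} By atoroidality of $(G,\Omega)$, each $\widehat S_k$ fixes a point of $\Omega$. So $\widehat S_k\leq gS_jg^{-1}$ for some $j$ and some $g\in G$. Peripheral subgroups of a $PD_3$-pair of groups with one end are maximal among $PD_2$-subgroups that are "boundary-like". More precisely, a $PD_2$-subgroup of a $PD_2$-group has finite index, so $[gS_jg^{-1}:\widehat S_k]<\infty$.

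\emph{Step 3: the containment is equality, and the matching is a bijection.} This is the main obstacle. I would use the fact that a peripheral subgroup of a $PD_3$-pair of groups is its own commensurator, and is not properly contained with finite index in another subgroup of the same kind. Here is how I would argue that. If $\widehat S_k$ has finite index in $S=gS_jg^{-1}$, consider the relative cohomology $H^3(G,\widehat\Omega;\Z[G])\cong\Z$. Restricting to $S$, the pair $(S,\{\widehat S_k\text{-orbits meeting }S\})$ would be a $PD_3$-pair structure on a $PD_2$-group, which is impossible. Alternatively, one can compare the end/duality modules: in a $PD_3$-pair, $\mathrm{Ker}(\Z[\widehat\Omega]\to\Z)\cong H^2(G;\Z[G])$. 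Applying the same with $\Omega$ gives $\Z[\widehat\Omega]\cong\Z[\Omega]$ as $\Z[G]$-permutation modules twisted appropriately. Combining this with the inclusion of stabilizers up to conjugacy should force the stabilizers to agree and the orbits to correspond bijectively. Symmetrically, atoroidality applied to $(G,\widehat\Omega)$ requires checking that $(G,\widehat\Omega)$ is also atoroidal. That follows because any $\Z^2$ or $Kb$ subgroup fixes a point of $\Omega$, and hence is contained in a conjugate of some $S_j$, which equals a conjugate of some $\widehat S_k$. It then gives the reverse inclusions $S_j\leq$ a conjugate of some $\widehat S_k$. Chaining the two inclusions gives $S_j\leq xS_{j'}x^{-1}$ with finite index. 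Malnormality-type properties of peripheral subgroups in atoroidal pairs \cite[Chapter 3]{PDD3} then give $j=j'$ and equality.

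\emph{Step 4: conclusion.} The identification of the stabilizer sets up to conjugacy, together with a bijection of orbits, yields a $G$-isomorphism $\widehat\Omega\cong\Omega$. Hence $(G,\widehat\Omega)\cong(G,\Omega)$.
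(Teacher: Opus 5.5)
There is a genuine gap in Step 3, in the direction ``every old boundary component is a new one''.

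\textbf{What works.} Steps 1 and 2 are fine. The first half of Step 3 is also fine, provided you simply cite that boundary components of a $PD_3$-pair of groups are their own commensurators and are pairwise non-conjugate (\cite{KR88a}, \cite[Lemma 10.3]{PDD3}). Then $\widehat S_k\le gS_jg^{-1}$ with finite index forces equality, and distinct $\widehat S_k$ are matched with distinct $S_j$. Your proposed justification of the commensurator property, by ``restricting'' the relative duality to $S$, is not valid, because duality does not pass to subgroups of infinite index, so cite the fact instead. All this yields, however, is an injection from the $G$-orbits of $\widehat\Omega$ into those of $\Omega$. In other words, $\widehat\Omega$ is $G$-isomorphic to a union of \emph{some} of the orbits of $\Omega$.

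\textbf{The gap.} To show $(G,\widehat\Omega)$ is atoroidal, you assert that each $S_j$ ``equals a conjugate of some $\widehat S_k$''. That is precisely the surjectivity you are trying to prove, so the argument is circular. The alternative via $H^2(G;\Z[G])\cong\mathrm{Ker}(\Z[\widehat\Omega]\to\Z)$ is not carried out, and an isomorphism of augmentation kernels does not by itself give an isomorphism of $G$-sets.

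\textbf{The missing idea.} You need a global constraint that rules out $\widehat\Omega$ being a proper nonempty union of orbits of $\Omega$. The paper gets this from the boundary compatibility condition. Since $\Omega\neq\emptyset$ forces $\cd G\le2$, the sequence
\[
0\to H_3(G,\Omega;\mathbb{F}_2)\cong\mathbb{F}_2\to\oplus_j H_2(S_j;\mathbb{F}_2)\to H_2(G;\mathbb{F}_2)
\]
is exact. So the mod-$2$ fundamental classes of the $b$ boundary components satisfy exactly one relation in $H_2(G;\mathbb{F}_2)$: their sum is $0$. The same holds for $(G,\widehat\Omega)$. Its boundary classes form a subfamily of the $[S_j]$, since conjugation acts trivially on homology, and this subfamily also sums to $0$. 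A nonempty subfamily with zero sum must be the whole family, so $\widehat b=b$ and your injection is a bijection. Note that $\widehat\Omega\neq\emptyset$, since otherwise $G$ would be a $PD_3$-group, contradicting $\cd G\le 2$.

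The Klein bottle/torus matching then follows from equality of stabilizers, as you say.
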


\begin{proof}
Since $\chi(G)=0$ the boundary components of  $(G,\Omega)$ and 
$(G,\widehat\Omega)$ are copies of $\Z^2$ and $\pi_1(Kb)$.
They are their own commensurators in $G$,
and are pairwise non-conjugate \cite{KR88a}, \cite[Lemma 10.3]{PDD3}.
Every subgroup isomorphic to $\Z^2$ or $\pi_1(Kb)$ fixes a point of $\Omega$,
since $(G,\Omega)$ is atoroidal, and so the boundary components of $(G,\widehat\Omega)$
are conjugate in $G$ to boundary components of $(G,\Omega)$.

Let $b$ and $\widehat{b}$ be the number of boundary components of $(G,\Omega)$
and $(G,\widehat\Omega)$, respectively.
The images of the fundamental classes of the boundary components of $(G,\Omega)$
in $H_2(G;\mathbb{F}_2)$ have sum 0 and generate a subspace of dimension $b-1$,  
by the boundary compatibility condition.
Since similar conditions hold for the boundary components of  $(G,\widehat\Omega)$, 
the boundaries must correspond bijectively (and $\widehat{b}=b$).
Klein bottles must correspond to Klein bottles, 
since the boundary components are their own commensurators in $G$.
Hence $\widehat\Omega\cong\Omega$ as $G$-sets, and so 
$(G,\widehat\Omega)\cong(G,\Omega)$.
\end{proof}

Thus the notion of atoroidal depends only on the ambient group $G$ if $\chi(G)=0$.

We shall use the above results with
Kropholler' JSJ Decomposition Theorem for $PD_n$-pairs of groups.
This was the precursor for other  analogues of the JSJ Decomposition Theorems 
for groups and pairs of groups,
and was originally formulated for pairs $(G,\Omega)$ in which $G$ has  {\it max-c\/};
increasing chains of centralizers are finite \cite{Krop}.
Although this property does not hold for all  $PD_n$-pairs of groups,
it has since been established for $PD_3$-pairs.
(See \cite[Theorem 9.5]{PDD3} and the subsequent remarks.)

\begin{lemma}
\label{indec chi=0}
Let $G$ be a finitely presentable, indecomposable group such that $\cd{G}=2$ and $\chi(G)=0$.
Then there are only finitely many homotopy types of 
$PD_3$-pairs $(P,\partial{P})$ with $\pi_1(P)\cong{G}$ and $\chi(P)=0$.
\end{lemma}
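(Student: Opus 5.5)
Our plan is first to reduce the statement to one about $PD_3$-pairs of groups, and then to apply Kropholler's JSJ decomposition. Suppose $(P,\partial{P})$ is a $PD_3$-pair with $\pi_1(P)\cong{G}$ and $\chi(P)=0$. Since $\cd{G}=2$ and $\chi(P)=\chi(G)=0$, condition (2) of Lemma \ref{asph pair} holds, so $P$ is aspherical. Since $G$ is indecomposable, it has one end, and $G\not\cong\Z$ because $\chi(\Z)=0$ but then $\cd{G}=1$. So we are in case (3) of Lemma \ref{asph pair} with $r=1$, $s=0$. By Theorem \ref{asph wellb} and the corollary after it, $(P,\partial{P})$ is equivalent to a $PD_3$-pair of groups $(G,\Omega)$: once $G$ has one end, no 1-handles can occur. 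By Hendriks' Classification Theorem and asphericity, the homotopy type is then determined by the peripheral system, that is, by the $G$-set $\Omega$ together with $w$ and $\mu$. Here $w$ is determined by $\Omega$, and $\mu$ is determined up to sign. It therefore suffices to show that there are only finitely many $G$-sets $\Omega$ making $(G,\Omega)$ a $PD_3$-pair of groups, modulo $\mathrm{Aut}(G)$.

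Since $\chi(\partial{P})=2\chi(P)=0$ and each boundary component is aspherical, every boundary subgroup is $\Z^2$ or $\pi_1(Kb)$. The number of components is at most $\beta_1(G;\mathbb{F}_2)$, by the inequalities at the start of \S3. We then split into cases. If $(G,\Omega)$ is of Seifert type, Lemma \ref{seifert} gives finiteness. If it is atoroidal, Lemma \ref{atoroidal} shows that $\Omega$ is unique. Note that atoroidality, in the presence of $\chi(G)=0$, depends only on $G$, so these cases are intrinsic to $G$ and do not depend on the chosen $\Omega$.

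The general case uses Kropholler's JSJ decomposition, which applies because max-$c$ holds for $PD_3$-pairs. The pair $(G,\Omega)$ splits as a graph of groups with edge groups $\Z^2$ or $\pi_1(Kb)$. Each vertex pair is either of Seifert type or atoroidal, and each has Euler characteristic $0$, since it is a $PD_3$-pair with toral or Klein-bottle boundary. The canonical part of this decomposition, namely the maximal Seifert pieces and the essential tori, should be determined by $G$ alone up to finitely many choices. It is governed by the commensurators of the $\Z^2$ and $\pi_1(Kb)$ subgroups, as in the proof of Lemma \ref{atoroidal}.

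Given the splitting, $\Omega$ is assembled from the peripheral $G_v$-sets of the vertex pairs, after removing the edge groups, which are interior. By Lemmas \ref{seifert} and \ref{atoroidal} applied to each vertex group, there are only finitely many choices at each vertex. There are finitely many vertices, so finitely many choices of $\Omega$ in total. The main obstacle is showing that the JSJ splitting, and especially the distinction between edge tori and boundary tori, is determined by $G$ independently of $\Omega$. For this I would argue that a maximal Seifert vertex group is intrinsic, because it is the normalizer of a maximal infinite cyclic subgroup with large centralizer. Within such a vertex group, the peripheral subgroups are among finitely many conjugacy classes of $\Z^2$ or $\pi_1(Kb)$ subgroups containing the fibre. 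A vertex group can also carry only finitely many $PD_3$-pair structures, by the argument of Lemma \ref{seifert}. This should bound the choices of which tori are edges and which are boundary.
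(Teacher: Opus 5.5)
Your overall route is the paper's: asphericity from Lemma~\ref{asph pair}, reduction to a $PD_3$-pair of groups $(G,\Omega)$ with boundary subgroups $\Z^2$ or $\pi_1(Kb)$, then Kropholler's JSJ decomposition, with Lemmas~\ref{seifert} and~\ref{atoroidal} giving finite ambiguity at the vertices. There are two minor points. First, before invoking Lemma~\ref{asph pair} you need $\partial P\neq\emptyset$. The paper gets this from $\cd G=2$: a $PD_3$-complex with one-ended group is aspherical, so its group would have $\cd=3$. Second, $\pi_1$-injectivity of the boundary follows directly from asphericity and one-endedness by \cite[Lemma 3.1]{PDD3}, so Theorem~\ref{asph wellb} is more than you need there.

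The substantive issue is the step you yourself flag as the main obstacle. The paper does not prove by hand that the splitting is independent of $\Omega$. It uses that Kropholler's JSJ decomposition is a canonical decomposition of $G$, preserved by every automorphism of $G$. Hence the vertex and edge groups are determined by $G$ up to $\mathrm{Aut}(G)$, and only the vertex pair structures remain to be counted.

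Your substitute sketch does not deliver this. You claim that the peripheral subgroups of a Seifert vertex group lie among finitely many conjugacy classes of $\Z^2$ or $\pi_1(Kb)$ subgroups containing the fibre. That is false once the pair structure varies. In $F(r)\times\langle t\rangle$ with $r\geq2$, the subgroups $\langle g,t\rangle$ with $g\in F(r)$ fall into infinitely many conjugacy classes. The boundary patterns of the different pair structures are finite in number only modulo automorphisms, which is exactly what Lemma~\ref{seifert} gives. Moreover, your normalizer description of Seifert pieces says nothing about edge tori separating two atoroidal pieces.

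Replace that part of your argument with an appeal to the $\mathrm{Aut}(G)$-invariance of Kropholler's decomposition. Your assembly of $\Omega$ from the vertex pairs then goes through as in the paper.
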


\begin{proof}
Let $(P,\partial{P})$ be such a $PD_3$-pair.
Then $\partial{P}$ is non-empty, since $\cd{G}=2$.
Hence  $P$ and the components of $\partial{P}$ are aspherical,  by Lemma \ref{asph pair}.

The boundary components are tori or Klein bottles,
since $\chi(G)=0$, and are $\pi_1$-injective, 
since $P$ is aspherical and $G$ is indecomposable, 
and so has one end  \cite[Lemma 3.1]{PDD3}.
Hence $(P,\partial{P})$ is the pair corresponding to a $PD_3$-pair of groups $(G,\Omega)$.
Kropholler's JSJ decomposition of $G$ is preserved under automorphisms of $G$, 
and each of the pieces $(G_v,\Omega_v)$ of the decomposition is determined 
up to finite ambiguity by the above lemmas.
It follows easily that $(P,\partial{P})$ is so determined.
\end{proof}

The corresponding result for irreducible and $\partial$-irreducible 3-manifolds
given in \cite{Sw80} has no restriction on the Euler characteristics arising.
In general, a boundary component $G_\omega$ is a maximal $PD_2$-subgroup.
At present we lack an argument that might show that there are only finitely many conjugacy classes of maximal $PD_2$-subgroups $H$ of given Euler characteristic $\chi(H)<0$.

\begin{theorem}
\label{finitely many}
Let $G_i$ be an indecomposable $FP_2$ group such that
$\cd{G_i}=2$ and $\chi(G_i)=0$,  for $i\leq{r}$,
and let $\pi=(*^r_{i=1}G_i)*F(s)$.
Then there are only finitely many homotopy types of
$PD_3$-pairs $(P,\partial{P})$ with $\pi_1(P)\cong\pi$ and
$\chi(P)=\chi(\pi)=1-r-s$.
\end{theorem}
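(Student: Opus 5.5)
The plan is to use Lemma \ref{asph pair} to see that every such pair is aspherical, so that its homotopy type is determined by its peripheral system. Theorem \ref{asph wellb} then breaks the pair into pieces, and Lemma \ref{indec chi=0} controls those pieces.

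First, $\pi$ is $FP_2$ and $\cd\pi\leq2$. The hypothesis $\chi(P)=\chi(\pi)=1-r-s$ is exactly condition (2) (equivalently (3), with $\chi(G_i)=0$) of Lemma \ref{asph pair}. We may assume $\partial{P}$ is non-empty, since $\cd\pi\leq2$ rules out closed $PD_3$-complexes unless $\pi=1$. Hence $P$ is aspherical, and if $\pi\not=1$ its boundary components are aspherical. The case $\pi=1$ gives only $(D^3,S^2)$. By Hendriks' Classification Theorem together with asphericity, the homotopy type of $(P,\partial{P})$ is determined by the peripheral system $(\pi,\{\kappa_j\})$ up to isomorphism, together with $w$ and $\mu$. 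The latter range over finite sets once the system is fixed: $w$ is determined by the $\kappa_j$ on the image of the boundary, and otherwise there are at most $|\mathrm{Hom}(\pi,\Z^\times)|$ choices, while $\mu$ is a generator up to sign. So it suffices to bound the peripheral systems up to isomorphism.

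Next, I would apply Theorem \ref{asph wellb} and follow its proof. The pair is obtained by adding 1-handles (including boundary connected sums) to aspherical pairs $(P_k,\partial P_k)$ corresponding to $PD_3$-pairs of groups, and to copies of $(D^3,S^2)$. The ambient groups of the pieces are one-ended free factors of $\pi$. By the Kurosh theorem and uniqueness of the Grushko decomposition, these are conjugates of the $G_i$, each occurring once. Each 1-handle contributes a free $\Z$ factor, so there are exactly $s$ handles, up to the bounded number of $(D^3,S^2)$ pieces. Euler characteristic is additive under adding 1-handles in the sense $\chi\mapsto\chi-1$, and $\chi(P)=1-r-s$. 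This forces each piece $P_i$ with $\pi_1\cong G_i$ to have $\chi(P_i)=0=\chi(G_i)$, and it forces the number of $(D^3,S^2)$ summands to be bounded; in fact it is determined by the count of components. Lemma \ref{indec chi=0} then gives only finitely many homotopy types for each piece $(P_i,\partial P_i)$, with boundary a finite set of tori and Klein bottles.

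Finally, I would count the ways of reassembling. Each piece has finitely many boundary components, and there are $s$ handles, so there are finitely many combinatorial choices: which boundary components the feet of each handle attach to, and the local orientations. The result of adding a 1-handle to an aspherical pair depends up to homotopy only on these choices, since discs in a connected surface are isotopic. Combining this with the boundedness of $(D^3,S^2)$ summands gives finiteness.

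The main obstacle is justifying that the reassembly data is genuinely finite at the level of peripheral systems. The identification of $\pi$ with the free product of the pieces' groups and $F(s)$ is only up to automorphism of $\pi$, and conjugations within factors could a priori produce infinitely many systems. I would handle this by working modulo $\mathrm{Aut}(\pi)$, which is legitimate since we count homotopy types of pairs rather than marked systems. I would use that the peripheral system produced by a 1-handle is determined up to isomorphism by the attaching components, as in the description of $(\widehat G,\widehat{\mathcal K})$. If needed, I would also check that the number of boundary components is bounded, via $\beta_0(\partial P;\mathbb{F}_2)\leq\beta_1(\pi;\mathbb{F}_2)$ as in the preliminary inequality.
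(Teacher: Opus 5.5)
Your proposal is essentially the paper's proof. It uses asphericity from Lemma~\ref{asph pair}, then the decomposition of Theorem~\ref{asph wellb} into pieces with $\pi_1\cong G_i$ (these have $\chi=0$ simply because they are aspherical), then Lemma~\ref{indec chi=0} for the pieces, and finally finiteness of the ways of reattaching 1-handles. Your remarks on $w$, $\mu$, the Grushko identification and markings make explicit what the paper leaves implicit.

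Two side claims are wrong but harmless:
\begin{enumerate}
\item $\cd\pi\leq2$ does not exclude closed examples: $S^1\times S^2$ has $\chi=0=\chi(\Z)$. The closed case only occurs for $r=0$, $s=1$, where it gives just two homotopy types.
\item $\chi$ cannot bound the number of $(D^3,S^2)$ summands, since each ball's $+1$ cancels against the extra handle needed to attach it. Instead, discard these summands, because a boundary connected sum with $(D^3,S^2)$ changes nothing; this leaves $r-1+s$ handles when $r\geq1$.
\end{enumerate}
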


\begin{proof}
Suppose that $(P,\partial{P})$ is such a $PD_3$-pair.
If $r=0$ then $\pi=F(s)$ and the result follows from Theorem 3.12 of \cite{PDD3}.
(If $s>0$ there are two homotopy types, one orientable and one not.)
Thus we may assume that $r>0$.
The ambient space $P$ and the components of $\partial{P}$ are aspherical
by Lemma \ref{asph pair},
and $(P,\partial{P})$ may be obtained by attaching 1-handles to a $PD_3$-pair 
$(Q,\partial{Q})\simeq\natural_{i=1}^r(Q_i,\partial{Q}_i)$,
with $Q_i$ aspherical and  $\pi_1(Q_i)\cong{G_i}$,
since the pair is well-built,
by Theorem \ref{asph wellb}.

There are only finitely many possibilities for each 
of these $PD_3$-pairs, by Lemma \ref{indec chi=0}.
Since there are only finitely many ways of forming the boundary connected sum of two
given $PD_3$-pairs, this gives the theorem.
\end{proof}


\newpage
\begin{thebibliography}{99}

\bibitem[Bi]{[Bi]} Bieri, R. 
\textit{ Homological Dimensions of Discrete Groups}, 

Queen Mary College Mathematical Notes, London (1976).

\bibitem[Hi]{[Hi]} Hillman, J. A. 
\textit{Four-Manifolds, Geometries and Knots},

Geometry and Topology Monographs 5, 
Geometry and Topology Publications (2002, 2007). 

(Latest revision: arXiv 0212142.v2 math.GT,  Nov.  2022.)

\bibitem[PDD3]{PDD3} Hillman, J. A.  \textit{Poincar\'e Duality in Dimension 3},

2nd edition, The Open Book Series, MSP, Berkeley (2023).

\bibitem{Bleile} Bleile, B. Poincar\'e Duality Pairs of Dimension Three,

Forum Math. 22 (2010), 277--301.

\bibitem{Bowd} Bowditch, B. H. Planar groups and the Seifert conjecture,

J. Reine Angew. Math. 576 (2004), 11--62.

\bibitem{DH3} Davis, J. and Hillman, J. A.  Aspherical $PD_4$-pairs,

in preparation.

\bibitem{Eckm} Eckmann, B.  Cyclic homology of groups and the Bass Conjecture,

Comment. Math. Helv. 61 (1986), 193--202.

\bibitem{EM80} Eckmann, B. and M\"uller, H. Poincar\'e duality groups of dimension two,

Comment. Math. Helv. 55 (1980), 510--520.

\bibitem{Krop} Kropholler, P. An analogue of the torus decomposition theorem 
for certain Poincar\'e duality groups,
Proc. London Math. Soc. 60 (1990), 503--529.

\bibitem{KR88a} Kropholler, P. H. and Roller, M. A. Splittings of Poincar\'e 
duality groups II, 

J. London Math. Soc. 38 (1988), 410--420.

\bibitem{RSS} Reeves, L., Scott, P. and Swarup, G.  Assoc. Math. Res. Monographs vol.3,

Association for Mathematical Research, Davis CA (2023).

\bibitem{St76} Strebel, R. A homological finiteness theorem,

Math. Z. 151  (1976), 263--275.

\bibitem{Sw80} Swarup, G.  A.  Two finiteness properties in 3-manifolds,

Bull. London Math. Soc.  12 (1980), 296--302.

\bibitem{Wa67} Wall, C. T. C.  Poincar\'e complexes: I,

Ann. Math. 86 (1967), 213--245.

\end{thebibliography}
\end{document}